\numberwithin{equation}{section}
\definecolor{ao}{rgb}{0.0, 0.5, 0.0}
\tikzset{
	>=stealth', 
	help lines/.style={dashed, thick}, 
	axis/.style={<->},
	important line/.style={thick},
	connection/.style={thick, dotted},
}
\pgfplotsset{compat=1.18}
\newcommand{\subjclass}[2][1991]{%
  \let\@oldtitle\@title%
  \gdef\@title{\@oldtitle\footnotetext{#1 \emph{MSC Codes.} #2}}%
}
\newcommand{\keywords}[1]{%
  \let\@@oldtitle\@title%
  \gdef\@title{\@@oldtitle\footnotetext{\emph{Key words.} #1.}}%
}
\newcommand{\eps}{\varepsilon}
\newcommand{\R}{\mathbb{R}}
\newcommand{\N}{\mathbb{N}}
\newcommand{\IR}{\mathbb{R}}
\newcommand{\bg}{\boldsymbol{g}}
\newcommand{\bm}{\boldsymbol{m}}
\newcommand{\bH}{\boldsymbol{H}}
\newcommand{\bn}{\boldsymbol{n}}
\newcommand{\bu}{\boldsymbol{u}}
\newcommand{\bv}{\boldsymbol{v}}
\newcommand{\bV}{\boldsymbol{V}}
\newcommand{\bx}{\boldsymbol{x}}
\newcommand{\bX}{\boldsymbol{X}}
\newcommand{\by}{\boldsymbol{y}}
\newcommand{\be}{\boldsymbol{e}}
\newcommand{\bphi}{\boldsymbol{\phi}}
\newcommand{\bvarphi}{\boldsymbol{\varphi}}
\newcommand{\bpsi}{\boldsymbol{\psi}}
\newcommand{\bnu}{\boldsymbol{\nu}}
\newcommand{\bGamma}{\boldsymbol{\Gamma}}
\newcommand{\CC}{\mathcal{C}}
\newcommand{\TT}{\mathcal{T}}
\newcommand{\KK}{\mathcal{K}}
\newcommand{\II}{\mathcal{I}}
\newcommand{\NN}{\mathcal{N}}
\newcommand{\MM}{\mathcal{M}}
\newcommand{\PP}{\mathcal{P}}
\newcommand{\HH}{\mathcal{H}}
\newcommand{\YY}{\mathcal{Y}}
\newcommand{\rmd}{\textrm{d}}
\newcommand{\norm}[1]{\left \lVert #1 \right \rVert}
\newcommand{\snorm}[1]{\left \lvert #1 \right \rvert}
\newcommand{\dotp}[2]{\left ( #1,#2 \right )}
\def\seminorm#1#2{\left\vert #1\right\vert_{#2}}
\def\set#1{\left\{#1\right\}}
\def\setc#1#2{\left\{#1:#2\right\}}
\DeclareMathOperator*{\argmin}{arg\,min}
\DeclareMathOperator*{\erf}{erf}
\newcommand{\bHeff}{\mathbf{H}_{\text{eff}}}
\newcommand{\hC}{\hat{\CC}}
\newtheorem{remark}{Remark}
\newtheorem{proposition}{Proposition}[section]
\title{A Reduced Basis Method for the Stochastic Landau-Lifshitz–Gilbert Equation}
\author[1]{Andrea Scaglioni \orcidlink{0000-0002-8601-5902}}
\author[2]{Michael Feischl}
\author[2]{Fernando Henr\'iquez}
\affil[1]{Faculty of Mathematics, University of Vienna. Oskar-Morgenstern-Platz 1, 1090 Vienna, Austria\\andrea.scaglioni@univie.ac.at}
\affil[2]{Institute for Analysis and Scientific Computing, Vienna University of Technology, Wiedner Hauptstra{\ss}e 8-10, A-1040 Wien, Austria\\
       \{michael.feischl,fernando.henriquez\}@asc.tuwien.ac.at}
\keywords{Stochastic Landau–Lifshitz–Gilbert equation, Stochastic PDE, Model order reduction, Reduced Basis method, Proper Orthogonal Decomposition (POD), Tangent plane scheme, Surrogate modeling, Sparse grids}
\subjclass[2010]{60H15, 60H15, 65L99, 65N30, 65T60, 65Y20, 65D30.}
\begin{document}

\maketitle

\begin{abstract}
In this work, we consider the construction of efficient surrogates for the stochastic version of the Landau-Lifshitz-Gilbert (LLG) equation using model order reduction techniques, in particular, the Reduced Basis (RB) method.
The Stochastic LLG (SLLG) equation is a widely used phenomenological model for the time evolution of the magnetization field confined to a ferromagnetic body while taking into account the effect of random heat perturbations. This phenomenon is mathematically formulated as a nonlinear parabolic problem, where the stochastic component is represented as a parameter-dependent datum depending on a non-compact and high-dimensional parameter.
In an \emph{offline} phase, we use Proper Orthogonal Decomposition (POD) on high-fidelity samples of the unbounded parameter space. To that end, we use the so-called \emph{tangent plane scheme}. 
For the \emph{online} phase of the RB method, we again employ the tangent plane scheme in the RB space. This is possible due to our particular construction that reduces both spaces of the magnetization and of its time derivative. Due to the saddle-point nature of this scheme, a stabilization that appropriately enriches the RB space is required. Numerical experiments show a clear advantage over earlier approaches using sparse grid interpolation.
In a complementary approach, we test a sparse grid approximation of the reduced coefficients in a purely data-driven method, exhibiting the weaknesses of earlier sparse grid approaches, but benefiting from increased stability.
\end{abstract}

\section{Introduction}
The Landau-Lifshitz-Gilbert equation (LLG) is a phenomenological model describing the time-evolution of the (normalized) \emph{magnetization} $\bm:[0, T] \times D\rightarrow \R^3$ of a magnetic body of sub-micrometer length-scale.
It is of particular relevance for material science and solid state physics, as it models a wide range of microscopic effects, as well as for engineering applications such as magnetic sensors, and \emph{magnetic random access memory} (MRAM) devices among others.
From a mathematical point of view, the LLG equation is a nonlinear PDE of parabolic type with a non-convex constraint, which renders the analysis of the equation and its discretization particularly challenging and often requires non-standard arguments and tools. While there are many numerical schemes available, they often come with certain CFL-type convergence conditions. The first unconditionally convergent method is the midpoint rule~\cite{Bartels2006Convergence}. To remove the necessary solution of a nonlinear system in each time-step, the so-called \emph{tangent plane scheme} (TPS) was introduced in~\cite{Alouges2008} and extended to high-order in~\cite{Akrivis2021Higher}.

The \emph{Stochastic} LLG equation (SLLG)~\cite{brown1963thermal, kubo1970brownian} is a stochastic PDE model extending the LLG equation by a multiplicative Wiener noise term that models thermal noise disturbing the magnetization.
This is particularly relevant for engineering applications, where quantifying the effect of thermal noise is vital for the design of reliable
tools such as heat-activated magnetic recording technologies~\cite{Akagi2012Thermally}.

A number of numerical methods have been developed for the space and time discretization of the SLLG equation. 
Suitable adaptations of the midpoint rule~\cite{BanasBrzezniak2014convergent} and tangent plane scheme~\cite{alouge2014asemidiscrete} aim to solve the stochastic PDE directly, while~\cite{goldys2016Finite} applies the Doss-Sussmann transform~\cite{doss, sussman} to the SLLG equation to rewrite it into an LLG equation with a random coefficient.

Particularly for computational design and optimization, fast and efficient surrogate models and the approximation of statistical quantities of interest (e.g. moments and probabilities of events) for the SLLG equation are paramount. However, only few methods are available. 
One approach is given in~\cite{an2025sparse}, where a sparse grid (high dimensional interpolation) method is applied to the SLLG equation.
Here, the SLLG equation is transformed into a parametric PDE by means of the Doss-Sussmann transform (as in~\cite{goldys2016Finite}) followed by the Lévy-Ciesielski expansion of the Brownian motion (see e.g.~\cite[Section~3.3.3]{Evans2013Introduction}).
The regularity of the parameter-to-solution map is studied as the application of an abstract framework for more general nonlinear parametric PDEs with Gaussian noise.
Moreover, this work tackles the \emph{curse of dimensionality} in the parameter
space by exploiting the sparsity of the parameter-to-solution map.
The resulting sparse grid approximation achieves an approximation rate that is independent of the number of scalar parameters. However, this can only be shown under severe assumptions on the problem at hand and these restrictions show in numerical experiments. While experiments that do not involve strong forcing work extremely well, even moderate oscillations in time bring the sparse grid method quickly to its limits. This is explainable by the fact that sparse grid interpolation is purely data-driven and does not obey physical energy principles between the interpolation points.
The goal of this work to develop a fast surrogate model that also works in these practically relevant cases.

\subsection{Model Order Reduction and the Reduced Basis Method}
Model Order Reduction (MOR) techniques seek to construct fast and efficient surrogates for parametric maps while maintaining a certified level of accuracy with respect to the high-fidelity solution. 
Among them, the Reduced Basis Method has been successfully applied to 
stationary parametric PDEs \cite{hesthaven2016certified,quarteroni2015reduced}.
Loosely speaking, the RB method follows a two-phases, \emph{offline} and \emph{online}, paradigm. In the former, a basis of reduced dimension 
is constructed from a number of expensive high-fidelity snapshots,
whereas in the latter, for a new parametric input, a solution is obtained
as an element of the previously computed reduced space. 

The temporal variable introduces additional complexities.
The solution of the time-dependent parametric PDEs is not an element of a discretization space, 
but an entire \emph{trajectory} or \emph{sample path} over a period of time.
In addition, and contrary to stationary problems, transport-dominated problems suffer from the so-called Kolmogorov barrier~\cite{arbes2025kolmogorov, greif2019decay}, which renders traditional linear MOR techniques computationally expensive. 

In the last few years, a variety of approaches have been proposed to tackle
the plethora of challenges posed by time-dependent parametric PDEs (pPDE), we mention non exhaustively:
Combined POD-Greedy sampling techniques \cite{haasdonk2013convergence,haasdonk2008reduced},  structure-preserving approaches for Hamiltonian formulations \cite{afkham2017structure},
non-intrusive methods \cite{audouze2013nonintrusive}, and non-linear MOR
\cite{peherstorfer2022breaking}. 
We refer to \cite{hesthaven2022reduced} for a comprehensive survey
of these and other MOR techniques for time-dependent problems.

\subsection{Contributions}
In this work, we explore the construction of efficient surrogates for the 
approximation of the stochastic-parametric version of the LLG equation using the RB method. For the construction of the reduced space in the offline phase, we follow a rather standard approach. That is, we define sample points in the unbounded parameter defining the SLLG equation. Then, we numerically compute
the trajectories, sample paths or high-fidelity samples. Using these and the well-known POD method, we extract a basis of reduced dimension which captures the behavior of the solution across the previously computed samples. 
Alternatively, for time-dependent problems, usually the POD-Greedy approach is preferred~\cite{haasdonk2013convergence,hesthaven2022reduced,siena2023introduction}. 
Therein, sample paths in time are selected sequentially in a greedy fashion
by exploring the parameter space over a suitably \emph{a priori} selected training set. For practical purposes, this requires, however, the existence of
computable and reliable \emph{a posteriori} error estimators as the ones in \cite{hesthaven2016certified}, which for the SLLG are not readily available. 

In the online phase, and once the reduced basis is already at hand, we explore two different approaches for the efficient computation of 
the reduced solution for a given parametric input.
\begin{itemize}
    \item[(i)] {\bf Galerkin POD-Tangent Plane Scheme (POD-TPS).}
    For the computation of the reduced solution, we consider the discretization of the original SLLG equation in the reduced space, as opposed to a Galerkin formulation in the finite element space. As in the computation of the high-fidelity samples, the problem's evolution in time is handled using the tangent plane scheme.
    This discretization results in a saddle point problem at each time step, introducing a Lagrange multiplier to enforce the conditions required by the tangent plane scheme.
    As discussed in \cite{Gerner2012Certified}, even if the high-fidelity model is inf-sup stable, one cannot guarantee that the reduced problem inherits this property. This may lead to a loss of accuracy of the reduced basis method in the online phase.
    Following ideas from \cite{Gerner2012Certified}, we propose an approach to enrich the reduced space for the velocity field—namely, the derivative of magnetization—and obtain a provably inf-sup stable pair of reduced spaces for the velocity field and the Lagrange multiplier.
    \item[(ii)] {\bf Sparse Grid-Reduced Basis Projection (SG-RBP)}.
    We consider the parametric maps that consist of the projection of
    the parametrically-dependent magnetization field into the reduced space. 
	Next, we consider the task of approximating these coefficients, which are as many as the chosen dimension of the reduced space, by means of tailored interpolation techniques. 
    Recently, in \cite{an2025sparse,scaglioni2024thesis}, the theoretical foundations and
    computational algorithms for this sort of sparse grid interpolation
    over a high-dimensional, unbounded parameter space are established.
    Unlike the previous approach, we are not relying on a Galerkin discretization
    in the reduced spaces to compute the reduced solution in the online phase. 
    Therefore, no unstable behavior in the computation of the reduced solution is encountered. 
\end{itemize}

\subsection{Outline}
This work is structured as follows. 
In Section~\ref{sec:model_LLG}, we introduce an equivalent random-parametric coefficient LLG equation arising from the discretization of the SLLG equation.
In Section~\ref{sec:tangent_plane_scheme}, we introduce the tangent plane
scheme, which serves throughout this work as the time-stepping method for the parametric LLG equation. 

In Section~\ref{sec:RB_LLG}, we introduce the reduced basis method for the SLLG equation, with a particular focus on the construction of the reduced space using POD in the offline phase and the Galerkin projection-based reduced basis method for the computation of the reduced solution in the online phase.
As previously discussed, one cannot guarantee inf-sup stability of the reduced problem. We propose a stabilization technique, which relies on the construction of a reduced space not only for the velocity field and the Lagrange multiplier, but also for the magnetization field itself. 

Section~\ref{sec:sg} proposes a different approach for the computation of the reduced solution that does not rely on the numerical approximation of a Galerkin problem. Instead of the parameter-to-solution map, that is a map with values in the high-fidelity discretization space, we consider {its projection onto} the reduced space, which is constructed exactly as in Section~\ref{sec:RB_LLG}. Then, one only needs to approximate as many coefficients as the dimension of the reduced space as opposed to as many as the dimension of the high-fidelity discretization space.

In Section~\ref{sec:numerical_results}, we present a series of numerical experiments comparing the performance of the reduced basis method during the offline phase and in the online phase, in the latter stage for both the previously described approaches.

\section{Model Problem: A Parametric LLG equation}
\label{sec:model_LLG}
In this section, we introduce a parametric time-dependent PDE arising from the discretization of the SLLG equation (a stochastic PDE).

Let $ I \coloneqq [0,T]$ denote the time interval of interest with $T>0$, $D \subset \R^3$ a bounded Lipschitz domain and let $(\Omega, \mathcal{F}, \mathcal{P})$ a probability triple.

The parametric equation is obtained by first applying the Doss-Sussmann transform~\cite{doss, sussman} to the stochastic PDE solution. The resulting random field solves an equivalent \emph{random coefficient} LLG equation (see~\cite{an2025sparse} for more details):
\begin{align}\label{eq:pLLG}
	\begin{split}
        \begin{cases}
		\alpha\partial_t \bm + \bm\times\partial_t\bm &= (1+\alpha^2)\left[\bHeff - (\bHeff\cdot\bm)\bm\right] 
		\qquad \textrm{in } I\times D, \\
		\partial_{\bn} \bm &= \mathbf{0}\qquad \textrm{on } I\times \partial D,\\
		\bm(0) &= \bm^0\qquad \textrm{on } D.
	\end{cases}
	\end{split}
\end{align}
Here, the constant $\alpha>0$ denotes the \emph{Gilbert damping parameter}, which is an empirically derived parameter to fit observational data.
The initial condition $\bm^0:D\rightarrow \R^3$ is assumed to be unit-modulus, i.e. $\seminorm{\bm^0}{2}=1$ a.e. in $D$ with $\partial_{\bn}\bm^0 = 0$ on $\partial D$. 
The \emph{effective field} $\bHeff$ is a random field defined as:
\begin{align*}
	\bHeff(\omega, \bm) &\coloneqq \Delta \bm + \hC(W(\omega), \bm)\\
	\hC(W, \bm) &\coloneqq e^{WG} \Delta e^{-WG} \bm - \Delta \bm + e^{-WG} \bH_{\text{ext}},
\end{align*} 
where $\omega\in\Omega$, $W = W(\omega, t): \Omega\times I \rightarrow \R$ denotes the Brownian motion, $G\bu:= \bu\times \bg$ for any $\bu\in \R^3$ and $\bg:D\rightarrow \R^3$ is a datum representing the spatial noise distribution (we assume it for simplicity to be pointwise unit-modulus in $D$ with $\partial_{\bn} \bg = 0$ on $\partial D$), $e^{WG}$ denotes the exponential of the linear operator $\bu \mapsto WG(\bu)$, and 
$\bH_{\text{ext}}: I\times D\rightarrow \R^3$ represents a given external magnetic field.

In order to derive the \emph{parametric coefficient} LLG equation, we consider the Lévy-Ciesielski construction of the Wiener process, which gives the identity
\begin{align}\label{eq:LC}
	W(\omega, t) = \sum_{n=1}^{\infty} Y_n(\omega) \eta_n(t)\qquad \text{a.e. } \omega\in\Omega, \quad \forall t\in I,
\end{align}
where $(Y_n)_{n\in\N}$ are i.i.d. standard Normal random variables, and 
$(\eta_n)_{n\in\N}$ is the \emph{Faber-Schauder basis} over $I$. Here, convergence holds uniformly in $t$ and almost surely. See~\cite[Section~3.3.3]{Evans2013Introduction} for more details. 

We then substitute the random variables $Y_n$ with parameters $y_n\in\R$, to obtain:
\begin{align}\label{eq:LCP}
	W(\by, t) = \sum_{n=1}^{\infty} y_n \eta_n(t),
\end{align}

A sufficient condition for~\eqref{eq:LCP} to converge in the H\"older space $C^{\delta/2}(I)$, $0 \leq \delta < 1$ (the time regularity of the Wiener process sample paths), is that the sequence $\by = (y_n)_{n\in\N}$ satisfies:
\begin{align}\label{eq:summability_Gamma}
    \sum_{\ell\in\N_0} 
		\max_{j=1,\dots, \lceil2^{\ell-1}\rceil} 
		\seminorm{ y_{\lfloor2^{\ell-1}\rfloor+j} }{} 2^{-(1-\delta)\ell/2}	
	< \infty.
\end{align}
for any $0 \leq \delta < 1$. 
This condition is obtained by imposing $\norm{W(\by, \cdot)}_{C^{\delta}(I)}$ to be finite and using the definition of the Faber-Schauder basis (see~\cite[Section~5]{an2025sparse} for further details).

Therefore, we consider the \emph{parameter space}
\begin{align*}
	\bGamma = \set{ \by = (y_n)_{n\in\N} \in \R^{\N} : \by \text{ satisfies } \eqref{eq:summability_Gamma}}.
\end{align*}
Substituting $W(\omega, t)$ with $W(\by, t)$ for $\by\in\bGamma$ and $t\in I$, Equation~\eqref{eq:pLLG} becomes a \emph{parametric coefficient} PDE and its solution $\bm$ is a parameter-dependent function in the sense:
\begin{align*}
	\bm:\bGamma\times I\times D\to\R^3.
\end{align*}

Scalar multiplication of~\eqref{eq:pLLG} with $\bm$ shows, under the assumption of sufficient smoothness, that $\bm$ and $\partial_t\bm$ are orthogonal. Hence, the time evolution $t\mapsto \bm(\by, t, \bx)$ preserves the modulus of the initial condition, i.e.,
\begin{align*}
	\seminorm{\bm(\by, t, \bx)}{2} = \seminorm{\bm^0(\bx)}{2} \qquad \text{for all } t\in I, \by\in\bGamma, \bx\in D.
\end{align*}

This observation motivates the introduction of the \emph{tangent plane to $\bm$}, i.e.,
\begin{align}
	\KK(\bm) \coloneqq \setc{\bphi \in H^1(D)^3}{\bm\cdot\bphi =0 \textrm{ a.e. in } D},
\end{align}
and we observe $\dotp{(\bHeff\cdot\bm)\bm}{\bphi}_{L^2(D)^3} = 0$ for all $\bphi\in \KK(\bm)$. Therefore, we may introduce a simplified weak formulation by testing~\eqref{eq:pLLG} with $\bphi\in \KK(\bm)$ and integration by parts: Find $\bm \in H^1( I\times D)^3$ such that for $t\in I$ and a.e. $\by\in\bGamma$ holds
\begin{align}\label{eq:pLLG_weak}
	\begin{split}		
		\dotp{\alpha\partial_t \bm}{ \bphi}_{L^2(D)^3} + \dotp{\bm\times\partial_t\bm}{ \bphi}_{L^2(D)^3} + \dotp{\nabla \bm}{ \nabla \bphi}_{L^2(D)^3}
		= 
        \dotp{\hC(W(\by, t) \bm)}{ \bphi}_{L^2(D)^3}
	\end{split}
\end{align}
for all $\bphi=\bpsi\times \bm$ with $\bpsi \in C^\infty( I\times D)$. Note that $\bphi(t)\in \KK(\bm(t))$ for all $t\in  I$.
Due to the assumptions on $\bg$, it can be proved that the right-hand side of~\eqref{eq:pLLG_weak} admits the following representation:
\begin{align*}
	\dotp{\hC(W(\by, t), \bm)}{\bphi}_{L^2(D)^3} 
	=&
    \dotp{\nabla\bm}{\nabla\bphi}_{L^2(D)^3}
	- 
    \dotp{\nabla\left(e^{WG}\bm\right)}{\nabla\left(e^{WG}\bphi\right)}_{L^2(D)^3}\\
	&+ \dotp{e^{-WG}\bH_{\text{ext}}}{ \bphi}_{L^2(D)^3},
\end{align*} 
where in turn, for all $\bphi \in L^2(D)^3$, we have (see \cite[Equation~4.3]{an2025sparse})
\begin{align*}
	e^{WG} \bphi = \bphi + \sin(W)G \bphi + (1-\cos(W)) G^2 \bphi .
\end{align*}
Thus, Equation~\eqref{eq:pLLG_weak} is equivalent to
\begin{align} \label{eq:pLLG_weak_simpler}
	\begin{split}		
		\dotp{\alpha\partial_t \bm}{ \bphi}_{L^2(D)^3} + \dotp{\bm\times\partial_t\bm}{ \bphi}_{L^2(D)^3} + \dotp{\nabla\left(e^{WG}\bm\right)}{\nabla\left(e^{WG}\bphi\right)}_{L^2(D)^3}
		=
        \dotp{e^{-WG}\bH_{\text{ext}}}{\bphi}_{L^2(D)^3}.
	\end{split}
\end{align}

The well-posedness of the weak form of this random-parametric coefficient LLG equation has been studied extensively in~\cite{goldys2016Finite}.

\subsection{High-Fidelity Discretization: Tangent Plane Scheme}
\label{sec:tangent_plane_scheme}
The tangent plane scheme is a well-established method for the approximation of the time-evolution of the LLG equations. 
Used together with the finite element method, it provides a scheme for the full discretization of the SLLG equation~\eqref{eq:pLLG}. 
See~\cite{Akrivis2021Higher,Alouges2008,alouge2014asemidiscrete,goldys2016Finite} and the references therein for more details, convergence properties, and error estimates.
In this section, we briefly describe the method and its use to approximate~\eqref{eq:pLLG_weak} in space and time.

Consider a shape-regular mesh $\TT_h$ on $D$ with mesh size $h>0$ and denote by $\NN_h$ its vertices.
Denote by $V_h$ the space of real-valued functions on $D$ that are globally continuous and piecewise affine over $\TT_h$ and 
$\bV_h = \left[V_h\right]^3$.
Let $\Pi_h: H^1(D)^3 \rightarrow \bV_h$ denote the $L^2$-projection onto $\bV_h$.

Consider the set of normalized discrete fields
\begin{align*}
	\MM_h &\coloneqq \setc{\bphi_h\in \bV_h}{\seminorm{\bphi_h(\bx)}{2}=1\ \forall \bx\in\NN_h}
\end{align*}
and the \emph{discrete tangent plane} to $\bm_h\in\MM_h$ imposed in an $L^2(D)$-sense (introduced in~\cite{Akrivis2021Higher})
\begin{align}\label{eq:def_TP_classic}
	\KK_h(\bm_h) 
	\coloneqq 
	\setc{\bphi_h\in \bV_h}{\dotp{\bphi_h\cdot \bm_h}{ \psi}_{L^2(D)} = 0\ \forall \psi \in V_h}.
\end{align}

Note that, in the original tangent plane scheme reference~\cite{Alouges2008}, the tangent plane was defined via $\NN_h$-nodewise orthogonality. However, the error estimates in~\cite{Akrivis2021Higher} for higher-order counterparts of the tangent plane scheme required this slightly different definition. We refer to~\cite{Akrivis2021Higher} for further discussions on this topic. For the reduced basis approach below, the definition via $L^2(D)$ orthogonality is natural, as the elements of the reduced basis don't have a natural correspondence to mesh nodes.

We define the trilinear form
$\mathsf{a}(\cdot,\cdot,\cdot): H^1(D)^3 \times H^1(D)^3 \times H^1(D)^3  \rightarrow \IR$ as
\begin{equation}\label{eq:trilinear_form_a}
	\mathsf{a}(\boldsymbol{v},\boldsymbol{m},\boldsymbol{\phi})
	\coloneqq
	\dotp{\alpha \boldsymbol{v} + \boldsymbol{m}\times\boldsymbol{v}}{\boldsymbol{\phi}}_{L^2(D)^3}
	+ 
	\dotp{\nabla (\boldsymbol{m} + \tau\boldsymbol{v})}{\nabla \boldsymbol{\phi}}_{L^2(D)^3}
	\quad
	\forall 
	\boldsymbol{v},\boldsymbol{m},\boldsymbol{\phi} \in H^1(D)^3,
\end{equation}
together with the time- and parameter-dependent bilinear form
$\ell(\cdot,\cdot;\by,t): H^1(D)^3 \times H^1(D)^3 \rightarrow \IR$
\begin{equation}
	\ell(\bm,\bphi;\by,t)
	\coloneqq
	\dotp{\hC(W(\by, t), \bm)}{\bphi}_{L^2(D)^3},
	\quad
	\forall 
	\bm,\bphi \in H^1(D)^3.
\end{equation}

Given $N_T\in\N$, we define the time step size $\tau = \frac{T}{N_T}$ and time steps $t_n = n\tau$ for $n=0,\dots,N_T$.
The tangent plane scheme applied to the parametric LLG equation~\eqref{eq:pLLG} for a fixed $\by\in \bGamma$ computes approximations $\bm_h^{n}(\by) \approx \bm(\by, t_n, \cdot) \in \MM_h$ for all $n=0,\dots,N_T$.
The algorithm reads as follows:

\begin{algorithm}[H]
	\caption{Tangent Plane Scheme (TPS)}
	\label{algo:TPScheme}
	\hspace*{\algorithmicindent} {\bf Input:} $\bm^0$, $N_T$, $\by$, $\tau$\\
	\hspace*{\algorithmicindent} {\bf Output:} $\left(\bm_h^n(\by)\right)_{n=0}^{N_T}$
	\hspace*{\algorithmicindent} 
	\begin{algorithmic}[1] 
		\State Compute $\bm_h^0(\by) = \Pi_h \bm^0$ and set $\widehat{\bm}_h^{0}(\by) = \frac{\bm_h^0}{\snorm{\bm_h^0}}$
		\For{$n=0,1,...,N_T-1$}
		\State {\bf Compute velocity:} Find $\bv_h^n(\by) \in \KK_h(\widehat{\bm}_h^n(\by))$ such that
			\begin{align*}
				\mathsf{a}(\boldsymbol{v}^n_h(\by),\widehat{\bm}^n_h(\by),\boldsymbol{\phi}_h)
				= 
				\ell(\widehat{\bm}^n_h(\by),\bphi_h;\by,t_n)
				\qquad \forall \bphi_h\in \KK_h(\widehat{\bm}_h^n(\by))
			\end{align*} \label{line:eq_TPS}
		  \State {\bf Update magnetization: }
          \begin{equation}\label{line:m_update}
              \bm_h^{n+1}(\by) = \widehat{\bm}_h^n(\by) + \tau \bv_h^n(\by).
          \end{equation}
        \State {\bf Nomalize magnetization: }
        \begin{equation}\label{line:normalize}
            \widehat{\bm}_h^{n+1}(\by) = 
            \frac{\bm_h^{n+1}(\by) }{\snorm{\bm_h^{n+1}(\by)}}.
        \end{equation}
		\EndFor
	\end{algorithmic}
\end{algorithm}
We emphasize that:
\begin{itemize}
	\item The discrete tangent plane $\KK_h(\widehat{\bm}_h^n)$ depends on $\widehat{\bm}_h^n(\by)$ and hence must be updated at each time step.
	The problem can be reduced to a formulation with $\bV_h$ as test and trial
	spaces by introducing appropriate Lagrange multipliers in $V_h$:
	Find $(\bv_h^n(\by), \lambda^{n}_h) \in \bV_h \times V_h$ such that for all $(\bphi_h, \xi_h) \in \bV_h \times V_h$ it holds
	\begin{equation}
	\begin{aligned}\label{eq:saddle_point_for}
		\mathsf{a}(\boldsymbol{v}^n_h(\by),\widehat{\bm}^n_h(\by),\boldsymbol{\phi}_h)
		+ 
        \dotp{\lambda_h^n}{ \bphi_h\cdot \widehat{\bm}_h^n(\by)}_{L^2(D)}
		&= \ell(\widehat{\bm}_h^n(\by), \bphi_h;\by, t) \\
		\dotp{\bv^{(n)}_h\cdot \widehat{\bm}_h^n(\by)}{ \xi_h}_{L^2(D)}&= 0,
	\end{aligned}
	\end{equation}
        Alternatively, one may implement the tangent plane scheme for test and trial spaces via a Gram Schmidt-like procedure. See e.g.~\cite{Kraus2019Interative}. We note, however, that the saddle point formulation transfers directly to non-standard basis function as in the reduced basis approach below.
	\item Observe that the algorithm guarantees $\left(\bm_h^{n}\right)_{n=0}^{N_T} \subset \bV_h$ but does not compute a normalized approximation.  This version of the algorithm may be preferred for the theoretical analysis, because of nonlinear nature of the normalization. We refer to~\cite{BartelsProjFree} for details on the analysis.
    However, physicists often prefer the normalized version of the algorithm as it typically leads to more plausible results on coarser time discretization.
    In this case, we could directly normalize the magnetization update:
	\begin{align*}
		\bm_h^{n+1} = \frac{\bm_h^n + \tau \bv_h^n}{\snorm{\bm_h^n + \tau \bv_h^n}} \in \MM_h
	\end{align*}
        and let $\widehat{\bm}_n^n(\by) = \bm_h^n(\by)$.
	\item The tangent plane can alternatively be defined with a nodal interpolator rather than an $L^2(D)$-projection. Higher-order convergence however has not yet been proven for this variant, although there is some progress for the related harmonic map heat flow~\cite{nodalconstraints}.
	\item As proved in~\cite{BartelsProjFree}, the scheme is unconditionally stable in the sense that no relation between time step size and mesh size is necessary for stability.
\end{itemize}

Finally, we may interpolate in time the discrete values $\left(\bm_h^n\right)_{n=0}^{N_T} \subset \bV_h$ to obtain a continuous-in-time discrete solution $\bm_{\tau, h}:\bGamma\times I\times D \rightarrow \R^3$ where, for each $\by\in\bGamma$, $t\in I$, $\bm(\by, t, \cdot ) \in \bV_h$. 

In~\cite{Akrivis2021Higher}, the authors study the convergence of this method and higher-order generalizations applied to the deterministic LLG equation, i.e. $\bHeff = \Delta \bm$.
For order $k$ BDF time stepping and order $p$ finite elements, they obtain convergence of order $\mathcal{O}(\tau^k + h^p)$ of the $L^{\infty}(I, H^1(D)^3)$ error under a mild CFL condition for $k=1,2$.
While they also study $k=3,4,5$, we are not as interested in these higher-order schemes because of the Hölder-$\frac{1}{2}$ sample-paths regularity of the Brownian motion, which limits the time regularity of the magnetization, thus also the maximum possible approximation rate in time to $\frac{3}{2}$.

\section{The Reduced Basis Method for the Parametric LLG Equation}
\label{sec:RB_LLG}
We are interested in the efficient approximation of the parameter-to-solution map for the magnetization field, i.e., 
\begin{equation}
    \mathcal{S}_{\bm}: \bGamma \rightarrow L^2(I;H^1(D)^3):  \by \mapsto \bm_h(\by)
\end{equation}
using MOR, in particular the RB method,
see e.g. \cite{hesthaven2016certified,quarteroni2015reduced} and
\cite{hesthaven2022reduced} for a survey concerning its application to time-dependent problems.

However, the TPS scheme, as described in Section~\ref{sec:tangent_plane_scheme} computes, at each time step, an approximation of the velocity field. The magnetization field itself is recovered in a post-processing step, as done in Line~\ref{line:m_update} of Algorithm~\ref{algo:TPScheme}. Therefore, we focus on the approximation of the velocity parameter-to-solution map, that is 
\begin{equation}\label{eq:p_2_s_map}
    \mathcal{S}_{\bv}: \bGamma \rightarrow L^2(I;H^1(D)^3):  \by \mapsto \bv_h(\by).
\end{equation}
More precisely, we are interested in the construction of a finite-dimensional subspace of $\bV_h$ such that each element of the solution manifold
\begin{equation}
    \mathfrak{M}
    \coloneqq
    \left\{
        \boldsymbol{v}_h(t,\by):
        \;
        (t,\by) \in I \times \bGamma
    \right\}
    \subset
    \bV_h
\end{equation}
is efficiently approximated. 

In so doing, we encounter the following difficulties. 
First, the presence of essential nonlinearities of the form $\bm \times \Delta \bm$ precludes the rapid convergence of the reduced basis method and, for that matter, of any linear model order reduction technique. The use of the TPS from Section \ref{sec:tangent_plane_scheme}, which requires solving \emph{linear} problems only, allows us to circumvent this issue.
However, with the use of the TPS also comes the requirement of using an inf-sup stable pair of spaces to discretize the saddle point formulation in Equation~\eqref{eq:saddle_point_for}. This cannot be proved for standard choices of reduced spaces for the primal variable and the Lagrange multiplier used in the online Galerkin POD. We therefore apply a stabilization procedure consisting of enriching the primal space with suitable \emph{supremizer} functions. 
Finally, the parameter space $\bGamma$ is of high, and possibly countably infinite, dimension.
The numerical tests from Section~\ref{sec:numerics_nd} suggest that the POD generates effective reduced bases robustly with respect to the dimension of $\bGamma$. 

In Section \ref{eq:rb_POD} we introduce the construction of reduced spaces using POD at a continuous level and shortly discuss rates of convergence.
In Section \ref{eq:empirical_POD}, we present the empirical counterpart of POD, and in Section \ref{sec:rb_stab} we propose
a tailored stabilization strategy for the online phase of the Galerkin POD method.  

\subsection{Construction of Reduced Spaces using POD}\label{eq:rb_POD}
As discussed, we are interested in the construction of a reduced space  $\bV^{\text{(rb)}}_{J}$ of finite, and hopefully small, dimension $J$ for the approximation of the parameter-to-solution map \eqref{eq:p_2_s_map} such that
\begin{equation}\label{eq:rom_basis_cont}
	\bV_{J}^{(\text{rb})}
	=
	\argmin_{
	\substack{\bV_{J} \subset \bV_h\\\dim \bV_{J}\leq J}}
	\varepsilon
	(\bV_{J}),
\end{equation}
where, for any $\bV_{J} \subset \bV_h$, we define
\begin{equation}\label{eq:error_measure_continuous}
\begin{aligned}
    \varepsilon
	\left(
        \bV_{J}
	\right)
    &
	\coloneqq
	\norm{
            \bv_h- \mathsf{P}_{\bV_{J}}\bv_h
        }_{L^2_\mu(\bGamma;L^2(I;H^1(D)^3))}^2
        \\
        &
	=
	\int_{\bGamma}
	\norm{
		\bv_h(\by)
		-
		\mathsf{P}_{\bV_{J}}
		\bv_h(\by)
	}^2_{L^2(I;H^1(D)^3)}
	\mu(\text{d}\by),
\end{aligned}
\end{equation}
with $\mu$ being the tensorized standard Gaussian measure, i.e. the tensor product of standard 
Gaussian measures on $\Gamma_i = \R$ for all $i=1,2,\dots$
and  $\mathsf{P}_{\bV_{J}}: H^1(D)^3 \rightarrow \bV_{J}$ 
is the orthogonal projection operator onto $\bV_{J}$ with respect
to the $H^1(\text{D})^3$ inner product. 

The operators defined via
\begin{align}
	\mathsf{T}_h\colon& L^2_\mu(\bGamma)\to \bV_h,&
	g\mapsto
	\mathsf{T}_h g 
	&=
	\int_{\bGamma}
	\bv_h(\by) g(\by) 
	\mu(\text{d}\by),\\
	\mathsf{T}^\star_h\colon& \bV_h\to L^2_\mu(\bGamma),&
	x\mapsto
	\mathsf{T}^\star_h x
	&=
	\big(\bv_h(\by),x\big)_{H^1(D)^3}
\end{align}
are of Hilbert-Schmidt type, and thus compact. This makes the integral operator
\begin{align}\label{eq:continuousio}
	\mathsf{K}_h\colon \bV_h\to \bV_h,\qquad
	x\mapsto
	\mathsf{K}_hx
	=
	\mathsf{T}_h\mathsf{T}^\star_h x
	=
	\int_{\bGamma}
    \bv_h(\by)\big(u_h(\by),x\big)_{H^1(D)^3} \mu(\text{d}\by)
\end{align}
compact, self-adjoint, and positive definite. 
Consequently, it has a countable sequence of eigenpairs
$(\zeta_{h,i},\sigma_{h,i}^2)_{i=1}^r \in \bV_h \times\mathbb{R}_{\geq0}$,
with $r \in \mathbb{N}$ being the rank of the operator $\mathsf{T}_h$. Moreover, the eigenvalues accumulate at zero, and we assume that ${\sigma}_{h,1}\geq {\sigma}_{h,2}\geq \cdots\geq {\sigma}_{h,r}\geq 0$.
It is well-known that the span of the eigenfunctions to the $J$ largest eigenvalues,
\begin{align}\label{eq:rb_exact}
	\bV_{J}^{(\text{rb})}
	=
	\operatorname{span}
    \left\{
        \zeta_{h,1}
        ,\ldots,
        \zeta_{h,J}
    \right\}
	\subset \bV_h
\end{align}
minimizes the projection error \eqref{eq:error_measure_continuous}
in the $L^2_\mu(\bGamma;L^2(I;H^1(D)^3))$-norm
among all subspaces of $\bV_h$ 
of dimension at most $J$.
Furthermore, following \cite[Proposition 6.3]{quarteroni2015reduced}, it holds
\begin{align}\label{eq:bounderrmeas}
    \varepsilon
    \left(
        \bV_{J}^{(\text{rb})}
    \right)
    =
    \sum_{i=J+1}^r\sigma_{h,i}^2.
\end{align}

\subsection{Reduced Basis Construction using Empirical POD}\label{eq:empirical_POD}
Let $\{\bvarphi_{1},\dots,\bvarphi_{N_h}\}$ be a basis of $\bV_h$, 
and denote by boldface letters $\mathbf{w}_h\in\mathbb{R}^{N_h}$
the coefficients vector of a function $\boldsymbol{w}_h\in \bV_h$ in the
aforementioned basis.
Let ${\bf Q}_h \in \mathbb{R}^{N_h\times N_h}$ be defined as
\begin{align}
	\left(
		{\bf Q}_h
	\right)_{i,j}
	=
	\dotp{\boldsymbol\varphi_{i}}{\boldsymbol\varphi_{j}}_{H^1(D)^3},
	\quad
	i,j
	\in \{1,\dots,N_h\}.
\end{align}
This one-to-one correspondence yields finite dimensional representations
of the norm and inner product in $\bV_h$, which read
\begin{equation}
	\dotp{\boldsymbol{w}_h}{\boldsymbol{z}_h}_{H^1(D)^3}
	=
	{\bf w}_h^{\star}{\bf Q}_h{\bf z}_h
	\quad
	\text{and}
	\quad
	\norm{\boldsymbol{w}_h}_{H^1(D)^3}
	=
	\sqrt{
		{\bf w}_h^{\star}{\bf Q}_h{\bf w}_h
	}
    \eqqcolon
    \norm{{\bf w}_h}_{{\bf Q}_h}
\end{equation}
for $\boldsymbol{w}_h,\boldsymbol{z}_h \in \bV_h$ and their corresponding coefficient vectors ${\bf w}_h,{\bf z}_h \in \mathbb{R}^{N_h}$.
Observe that ${\bf Q}_h$ is symmetric and positive definite.

Let $\bV_{J}$ be a subspace of $\bV_h$ of dimension $J$ spanned
by the orthonormal basis $\{\bu_h^{(1)},\dots,\bu_h^{(J)}\}$.
Set ${\bf\Phi}=\left({\bf u}_h^{(1)},\dots,{\bf u}_h^{(J)}\right) \in \mathbb{R}^{N_h \times J}$
with coefficient vectors ${\bf u}_h^{(i)} \in \mathbb{R}^{N_h}$ corresponding to the basis
$\{\bvarphi_{1},\dots,\bvarphi_{N_h}\}$ of $\bV_h$.

Let $\bv_h(t,\by) \in \bV_h$ be the {velocity field, which depends both on time and the parametric variable} $\by \in \bGamma$. Furthermore, let
$\{t_0,\dots,t_{N_t}\} \subset I$ and $\{\by^{(1)},\dots,\by^{(N_S)}\} \subset \bGamma$
be sampling points in the time interval $I$ and the parameter space $\bGamma$, 
respectively.
For $(t, \by) \in I \times \bGamma$,
we consider ${\bf v}_h(t,\by) \in \mathbb{R}^{N_h}$ to be the representation 
of $\bv_h(t,\by) \in \bV_h$ with respect to $\{\bvarphi_{1},\dots,\bvarphi_{N_h}\}$ and define
\begin{equation}
\begin{aligned}
	\varepsilon
	\left(
		\bV_{J}
	\right)
	&
	\coloneqq
	\frac{1}{N_S N_t} \sum_{m=1}^{N_S} \sum_{n=1}^{N_t}
	\norm{
		{\bf v}_h
		\big(
			t_n,
			\by^{(m)}
		\big)
		-
		\sum_{j=1}^J
		\Big(\big({\bf u}^{(j)}_h\big)^\top{\bf Q}_h{\bf v}_h\big(t_n,\by^{(m)}\big)\Big)
		{\bf u}^{(j)}_h
	}^2_{{\bf Q}_h}
	\\
	&
	=
	\frac{1}{N_S N_t} \sum_{m=1}^{N_S}  \sum_{n=1}^{N_t}		
	\norm{
		{\bf v}_h
		\big(
			t_n,
			\by^{(m)}
		\big)
		-
		{\bf \Phi}
		{\bf \Phi}^\star
		{\bf Q}_h
		{\bf v}_h
		\big(
			t_n,
			\by^{(m)}
		\big)
	}^2_{{\bf Q}_h}.
\end{aligned}
\end{equation}
We are interested in finding the optimal subspace, i.e.,
\begin{equation}
	\bV^{\text{(rb)}}_{J}
	=
	\argmin_{\substack{\bV_{J} \subset \bV_{h} \\ \text{dim}(\bV_{J} ) \leq J}}
	\varepsilon
	\left(
		\bV_{J}
	\right),
\end{equation}
which is equivalent to finding ${\bf \Phi}^{\text{(rb)}}_J \in  \mathbb{R}^{N_h \times J}$
such that
\begin{equation}\label{eq:empirical_reduced_space_V_rb}
	{\bf \Phi}^{\text{(rb)}}_J
	=
	\argmin_{\substack{ {\bf \Phi}_J \in  \mathbb{R}^{N_h \times J}\\ {\bf \Phi}_J \in \mathscr{V}_J}}
	\varepsilon
	\left(
		{\bf \Phi}_J
	\right).
\end{equation}
Here, the notation $\varepsilon({\bf \Phi}_J)$ reflects the dependence 
on the subspace $\bV_J$ which is encoded in the columns of ${\bf \Phi}_J$, and
\begin{equation}
	\mathscr{V}_{J}
	\coloneqq
	\left\{
		{\bf \Phi} \in \IR^{N_h \times J}:
		{\bf \Phi}^\top 
		{\bf {Q}}_h
		{\bf \Phi}
		=
		{\bf I}_{J}
	\right\}.
\end{equation}

To compute the optimal subspace $\bV_J$, we first sample i.i.d. standard Normal parameter vectors $\by^{(1)}, \dots, \by^{(N_S)} \in \bGamma$. 
By this we mean that for each $m \in \{1,\dots,N_S\}$, $\by^{(m)} = (y^{(m)}_1, \dots, y^{(m)}_{s})$ for some $s<\infty$ and the components are i.i.d. standard normal. 
Then, we assemble the snapshot matrix for the trajectory stemming from the parametric sample $\by^{(m)}$ as follows
\begin{equation}
	\mathbf{S}_m
	\coloneqq
	\begin{pmatrix}
		{\bf v}_h^0(\by^{(m)}), \dots, {\bf v}_h^{N_t-1}(\by^{(m)})
	\end{pmatrix}
	\in \mathbb{R}^{N_h \times N_t}
\end{equation}
and consider as well the global snapshot matrix 
\begin{align}\label{eq:snapshot_matrix}
	\mathbf{S}
	\coloneqq
	\left(
		\mathbf{S}_1,\dots,\mathbf{S}_{N_S}
	\right)
	\in
	\IR^{N_h \times N}, 
\end{align}
where $N = N_t N_S$. 
Next, we consider the SVD $\widetilde{\mathbf{S}} = \mathbf{U}\boldsymbol{\Sigma}\mathbf{V}^{\dagger}$ of $\mathbf{S} = N^{-1/2}{\bf Q}_h^{1/2}\widetilde{\mathbf{S}}$, 
\begin{equation}
	{\bf U}
	=
	\left(
		{\boldsymbol{\zeta}}_1,
		\ldots,
		{\boldsymbol{\zeta}}_{N_h}
	\right)\in \mathbb{R}^{N_h \times N_h},
	\quad 
	{\bf V}
	=
	\left(
		{\boldsymbol{\psi}}_1,
		\ldots,
		{\boldsymbol{\psi}}_{N_h}
	\right) 
	\in 
	\mathbb{R}^{N \times N},
\end{equation}
are orthogonal matrices, and $\boldsymbol{\Sigma}=\operatorname{diag}
\left(\check{\sigma}_{h,1}, \ldots, \check{\sigma}_{h,\check{r}}\right) 
\in \mathbb{R}^{N_h \times N}$ with $\check{\sigma}_{h,1} \geq \cdots \geq \check{\sigma}_{h,\check{r}}>0$, 
being $\check{r} \in \mathbb{N}$ the rank of 
$\widetilde{\mathbf{S}}$. 
Then, the solution of \eqref{eq:empirical_reduced_space_V_rb} can be expressed using the 
SVD of $\widetilde{\mathbf{S}}$ as follows
\begin{equation}\label{eq:reduced_space}
	{\bf \Phi}^{\text{(rb)}}_J
	=
	\left(
		{\boldsymbol\zeta}^{\text{(rb)}}_1,
		\dots,
		{\boldsymbol\zeta}^{\text{(rb)}}_J
	\right)
	=
	\left(
	{\bf Q}_h^{-1/2}
	{\boldsymbol\zeta}_1,
	\dots,
	{\bf Q}_h^{-1/2}
	{\boldsymbol\zeta}_J
	\right)
\end{equation}
for $J\leq \check{r}$. 

Following \cite[Proposition 6.1]{quarteroni2015reduced}, we have that
\begin{equation}
	\varepsilon
	\left(
		\bV^{\text{(rb)}}_{J}
	\right)
    =
    \sum_{j=J+1}^{\check{r}}
    \check{\sigma}^2_{j}.
\end{equation}

\begin{remark}[Criterion to select $J$]\label{eq:criterium}
As per usual, given a tolerance $\epsilon_{\normalfont\text{POD}}>0$, 
we consider the minimum integer $J$ such that
\begin{equation}
    \frac{
        \sum_{j=1}^J
        \check{\sigma}^2_{j}
    }{
        \sum_{j=1}^{\check{r}}
        \check{\sigma}^2_{j}
    }
    \geq
    1
    -
    \epsilon^2_{\normalfont\text{POD}}.
\end{equation}  
\end{remark}

\begin{remark}\label{rmk:POD_other}
Observe that although the construction of the reduced basis has been thoroughly explained 
for the velocity field $\bv_h(t,\by)$, the exact same procedure can be applied for the construction
of a reduced space for the magnetization field and the Lagrange multiplier involved in the discretization of
\eqref{eq:saddle_point_for}.
\end{remark}

\subsection{Reduced Basis Approximation and Supremizer Stabilization}
\label{sec:rb_stab}
Following the construction of the reduced basis for the velocity field described in Section~\ref{eq:empirical_POD}, in the online phase of the Galerkin POD method, we compute the parametric solution for a given parametric  input $\by \in \bGamma$. We do so by seeking the solution of the corresponding variational problem posed in $\bV^{\text{(rb)}}_J$, i.e. the reduced space of dimension $J$ for the velocity field, as opposed to in $\bV_h$, namely the
high-fidelity discretization space. In addition, considering that in~\eqref{eq:saddle_point_for} we solve a saddle point problem, we also construct a reduced space $W^{\text{(rb)}}_R$ of dimension $R$ for the Lagrange multiplier.

Observe that the dimension of the reduced space for the Lagrange multiplier is not necessarily the same as that of the velocity field.

These considerations yield the following reduced problem to be solved in the online phase of the Galerkin POD method, which 
is referred to in the following as Galerkin POD-TPS.

\begin{algorithm}[H]
	\caption{Galerkin POD-TPS: Reduced Basis Online Phase with the TPS}
	\label{algo:POD-TPS-Reduced-Basis}
	\hspace*{\algorithmicindent} {\bf Input:} $\bm^0$, $N_T$, $\bV^{\text{(rb)}}_J$, $W^{\text{(rb)}}_R$, $\by$, $\tau$\\
	\hspace*{\algorithmicindent} {\bf Output:} $\left(\bm_{J}^n(\by)\right)_{n=0}^{N_T}$
	\begin{algorithmic}[1] 
	\State Compute $\bm_J^0(\by) = \mathsf{P}^{\text{(rb)}}_J \bm^0 \in \bV^{\text{(rb)}}_J$ and set $\widehat{\bm}^{0}_J(\by)
			=
			\frac{
				\bm^{0}_J (\by)
			}{
				\snorm{\bm^{0}_J(\by)}
			}
            \in
            \bV_h.$
	       \For{$n=0,1,...,N_T-1$}
		\State {\bf Compute velocity: }
        Find $\left(\boldsymbol{v}^{n}_J(\by),\lambda^{n}_R(\by)\right) \in \bV^{\text{(rb)}}_J \times W^{\text{(rb)}}_R$ such that
        \begin{equation}\label{eq:saddle_point_for_RB}
		\begin{aligned}
			\mathsf{a}
			\left(
				\boldsymbol{v}^{n}_J(\by),\widehat{\bm}^{n}_h(\by),\bphi_J
			\right)
            +
            \dotp{\lambda^{n}_R(\by)}{ \bphi_J \cdot \widehat{\bm}^{n}_h(\by)}_{L^2(D)}
            &
			= 
            \ell
			\left(
				\widehat{\bm}^{n}_h(\by),\bphi_J;\by,t_n
			\right),
            \\
            \dotp{\bv^{n}_J(\by)\cdot \widehat{\bm}^{n}_h(\by)}{ \xi_R}_{L^2(D)}
            &
		= 
            0,
		\end{aligned}
		\end{equation}
        for all $\left(\bphi_J,\xi_R\right)\in \bV^{\text{(rb)}}_J \times W^{\text{(rb)}}_R$.
		\State {\bf Update magnetization: }
		\begin{equation}\label{eq:update_m_RB_TPS}
			\bm^{n+1}_J(\by)
			=
            \bm^{n}_J(\by) + \tau \bv^{n}_J(\by)
            \in
            \bV^{\text{(rb)}}_J.
        \end{equation}
        \State {\bf Normalize Magnetization:}
		\begin{equation}\label{line:normalize_mag}
			\widehat{\bm}^{n+1}_{J}(\by)
			=
			\frac{
				\bm^{n+1}_J(\by)
			}{
				\snorm{\bm^{n+1}_J(\by)}
			}
            \in
            \bV_h.
        \end{equation}	
	\EndFor
	\end{algorithmic}
\end{algorithm}

The standard Galerkin POD-TPS method is not guaranteed to be inf-sup stable. 
More precisely, at each time step, the inf-sup stability of the high-fidelity model, given in our case by the TPS described in Algorithm \ref{algo:TPScheme}, does not necessarily imply inf-sup stability of $\bV^{\text{(rb)}}_J$ and $W^{\text{(rb)}}_R$  for any combination of reduced dimensions $J$ and $R$.

Following \cite{Gerner2012Certified}, to obtain a stable pair of reduced spaces, it is required to enrich the velocities' reduced space appropriately. 
An important observation is that the dependence of the saddle point problem described in Algorithm \ref{algo:POD-TPS-Reduced-Basis} upon the parametric input $\by \in \bGamma$ is through the right-hand
side of \eqref{eq:saddle_point_for_RB} and the normalized magnetization field $\widehat{\bm}^{n}_h(\by)$ computed in the 
previous iteration of the TPS.

Let $\boldsymbol{X}^{\text{(rb)}}_K \coloneqq \text{span} \left\{\boldsymbol{\eta}^{\text{(rb)}}_1,\dots,\boldsymbol{\eta}^{\text{(rb)}}_K\right\}$ be the reduced space of dimension $K$ for the magnetization field. This reduced basis is computed in the offline
phase of the Galerkin POD by taking the approximation of the normalized magnetization field computed in Line \ref{line:normalize} of Algorithm \ref{algo:TPScheme}
and applying the empirical POD method from Section \ref{eq:empirical_POD}.
We are interested in the construction of an
inf-sup stable pair of
reduced spaces for the discretization of the TPS in the online phase of the RB method. To this end, given $\bm \in H^1(D)^3$, we define the \emph{supremizer}
operator $T_{\bm}: H^1(D) \rightarrow \bV_h $ as
\begin{equation}
    \dotp{T_{\bm}\lambda_h}{\bv_h}_{H^1(D)^3}
    \coloneqq
    \dotp{\lambda_h}{\boldsymbol{v}_h \cdot \bm}_{L^2(D)},
    \quad
    \forall \boldsymbol{v}_h \in \bV_h, \lambda_h \in V_h.
\end{equation}
In addition, let $W^{\text{(rb)}}_R  = \text{span}
\left\{\zeta^{\text{(rb)}}_1,\dots,\zeta^{\text{(rb)}}_R\right\}$ be the reduced space for the Lagrange multiplier. 
We define the \emph{enriched}
reduced space for the velocity field as follows:
\begin{equation}\label{eq:stabilized_space_velocity}
    \bV^{\text{(rb)},\text{{stab}}}_{J,K,R}
    =
    \bV^{\text{(rb)}}_J
    \oplus
    \text{span}
    \left\{
        T_{\boldsymbol{\eta}^{\text{(rb)}}_k}
        \zeta^{\text{(rb)}}_r:
        1\leq k \leq K,\;
        1\leq r \leq R
    \right\}.
\end{equation}
Observe that now the space is of dimension 
$J + R \times K$, that is we add $R \times K$ elements
to the original reduced space $\bV^{\text{(rb)}}_J$ for the velocity field. 

For $n=0,\dots,N_T-1$ and each $\by \in \bGamma$ one has
\begin{equation}\label{eq:inf_sup_calc}
\begin{aligned}
    \beta^{\text{(rb)},n}(\by)
    &
    \coloneqq
    \inf_{0\neq \lambda_R \in W^{\text{(rb)}}_R }
    \sup_{0 \neq \bphi_J \in \bV^{\text{(rb)},\text{{stab}}}_{J,K,R}} 
    \frac{
        \dotp{\lambda_R}{ \bphi_J \cdot \widehat{\bm}^{n}_{J}(\by)}_{L^2(D)}
    }{  
        \norm{\lambda_R}_{L^2(D)}
        \norm{\bphi_J}_{H^1(D)^3}
    }
    \\
    &
    \geq
    \widetilde{\beta}^{\text{(rb)},n}(\by)
    -
    \norm{
        \widehat{\bm}^{n}_J(\by)
        -
        \mathsf{P}_{\boldsymbol{X}^{\text{(rb)}}_K}
        \widehat{\bm}_h^{n}(\by)
    }_{H^1(D)^3}
    \\
    &
    \geq
    \widetilde{\beta}^{\text{(rb)},n}(\by)
    -
    	\underbrace{
        \norm{
            \widehat{\bm}^{n}_h(\by)
            -
            \mathsf{P}_{\boldsymbol{X}^{\text{(rb)}}_K}
            \widehat{\bm}_h^{n}(\by)
        }_{H^1(D)^3}}_{(\spadesuit)}
        -
        \underbrace{
        \norm{
            \widehat{\bm}^{n}_J(\by)
            -
            \widehat{\bm}_h^{n}(\by)
        }_{H^1(D)^3}}_{(\clubsuit)},
\end{aligned}
\end{equation}
where $\widehat{\bm}^{n}_J(\by)$ is as in \eqref{line:normalize_mag},
and, for each $\by \in \bGamma$, $\widetilde{\beta}^{\text{(rb)},n}(\by)$ is defined 
at time step $t_n$ as
\begin{equation}\label{eq:inf_sup_constant_stab}
    \widetilde{\beta}^{\text{(rb)},n}(\by)
    \coloneqq
    \inf_{0\neq \lambda_R \in W^{\text{(rb)}}_R }
    \sup_{0 \neq \bphi_J \in \bV^{\text{(rb)},\text{{stab}}}_{J,K,R}} 
    \frac{
        \dotp{\lambda_R}{ \bphi_J \cdot \mathsf{P}_{\boldsymbol{X}^{\text{(rb)}}_K} {\widehat{\bm}}^{n}_h(\by)}_{L^2(D)}
    }{  
        \norm{\lambda_R}_{L^2(D)}
        \norm{\bphi_J}_{H^1(D)^3}
    }.
\end{equation}
For any $\lambda \in L^2(D)$ and
$\bphi \in H^1(D)^3$, one has 
\begin{equation}\label{eq:approx_bilinear_b}
	\dotp{\lambda}{ \bphi \cdot \mathsf{P}_{\boldsymbol{X}^{\text{(rb)}}_K} {\widehat{\bm}}^{n}_h(\by)}_{L^2(D)}
	=
	\sum_{k=1}^K
	c^{n}_k(\by)
	\dotp{\lambda}{ \bphi \cdot \boldsymbol{\eta}^{\text{(rb)}}_k}_{L^2(D)^3},
\end{equation}
where $\left(c^n_k(\by)\right)_{k=1}^K \in \mathbb{R}^K$ correspond to the coefficients of the representation of $\widehat{\bm}_h^{n}(\by)$ in $\boldsymbol{X}^{\text{(rb)}}_K$, for each $\by \in \bGamma$, and $\widehat{\bm}_h^{n}(\by)$ is computed as in Line \ref{line:normalize} of Algorithm \ref{algo:TPScheme}.

Observe that, through the reduced basis approximation of
$\widehat{\bm}_h^{n}(\by)$, one can 
obtain, as presented in \eqref{eq:approx_bilinear_b}, an affine-parametric approximation of the bilinear form 
$(\lambda,\bphi ) \mapsto \dotp{\lambda}{ \bphi \cdot \widehat{\bm}_h^{n}(\by)}_{L^2(D)^3}$,
which is a common assumption in the reduced basis method. 
In this particular case, this construction is possible since the aforementioned bilinear form depends on the parameter $\by \in \bGamma$
only through $\widehat{\bm}_h^{n}(\by)$.
Alternatively, 
representations of this form can be computed using the Empirical Interpolation
Method and its discrete counterpart, see, e.g., 
\cite[Chapter 10]{quarteroni2015reduced}.

Following \cite[Section 2.3]{Gerner2012Certified}, 
which in turn uses {\cite[Lemma 3.1]{rozza2007stability}}, one has that
\begin{equation}
	 \widetilde{\beta}^{\text{(rb)},n}(\by)
	 \geq	
	 \inf_{0\neq \lambda_h\in V_h}
    	\sup_{0 \neq \bphi_h \in \bV_h} 
    \frac{
        \dotp{\lambda_h}{ \bphi_h \cdot \mathsf{P}_{\boldsymbol{X}^{\text{(rb)}}_K} {\widehat{\bm}}^{n}_h(\by)}_{L^2(D)}
    }{  
        \norm{\lambda_h}_{L^2(D)}
        \norm{\bphi_h}_{H^1(D)^3}
    },
\end{equation}
namely, the inf-sup constant $\widetilde{\beta}^{\text{(rb)},n}(\by)$
introduced in \eqref{eq:inf_sup_constant_stab} is bounded from below 
by the corresponding inf-sup constant in the high-fidelity spaces. 
This property follows from the use of the reduced space $\bV^{\text{(rb)},\text{{stab}}}_{J,K,R}$
in the definition of  $\widetilde{\beta}^{\text{(rb)},n}(\by)$. Observe that this property
is valid pointwise for each $\by \in \bGamma$ and each time step $t_n$.

As a consequence of~\cite[Lemma 5.5]{Akrivis2021Higher}\footnote{{Direct application of the result~\cite[Lemma 5.5]{Akrivis2021Higher} requires 
$\snorm{\mathsf{P}_{\boldsymbol{X}^{\text{(rb)}}_K} {\widehat{\bm}}^{n}_h(\by)}=1$
a.e in $D$, which we do not have. However, an inspection of the proof
reveals that, for the $L^2(D)$-case in question, this condition is not necessary.}}, one has that for each
$B>0$ there exists $h_B>0$ such that
provided $\norm{\mathsf{P}_{\boldsymbol{X}^{\text{(rb)}}_K} {\widehat{\bm}}^{n}_h(\by)}_{W^{1,\infty}(D)}\leq B$ one has that for any $h<h_B$ it holds $\widetilde{\beta}^{\text{(rb)},n}(\by)\gtrsim h^{-1}>0$ for each $\by \in \bGamma$.
Furthermore, under the natural assumption that the terms $(\spadesuit)$ and $(\clubsuit)$ decrease as $K$
and $J$ increase, respectively, and considering the last equation of \eqref{eq:inf_sup_calc}, we conclude that
the inf-sup constant $\beta^{\text{(rb)},n}(\by)$, which includes the reduced basis enrichment for the velocity field 
according to \eqref{eq:stabilized_space_velocity}, is strictly positive for sufficiently large $J$ and $K$.

Finally, considering that the bilinear
form $\mathsf{a}(\cdot,\widehat{\bm}^{n}_J(\by),\cdot): H^1(D)^3 \times H^1(D)^3  \rightarrow \IR$, with $\mathsf{a}(\cdot,\cdot,\cdot)$ as in \eqref{eq:trilinear_form_a}, is $H^1(D)^3$-elliptic, one may conclude that the variant of 
Algorithm \ref{algo:POD-TPS-Reduced-Basis} in which $\bV^{\text{(rb)}}_J$ is replaced by the
$\bV^{\text{(rb)},\text{{stab}}}_{J,K,R}$, in particular in the saddle point problem \eqref{eq:saddle_point_for_RB}, is stable for each $\by \in \bGamma$ and at each time step $t_n$.

\section{Surrogate Modelling using Sparse Grid Interpolation}
\label{sec:sg}
Once a reduced basis is available, the Galerkin POD-TPS algorithm from Section~\ref{sec:RB_LLG} provides an efficient reduced order method to approximate the magnetizations $(\bm_h^n(\by))_{n=0}^{N_T}$ for a new parameter $\by\in\bGamma$.
However, two issues arise. 
Firstly, the online reduced problem
as described in Algorithm \ref{algo:POD-TPS-Reduced-Basis} lacks inf-sup stability and stabilizing the velocity basis requires an additional computational effort. Secondly, to approximate $M \gg 1$ new snapshots corresponding to $\by^{\text{(test)}}_1,\dots,\by^{\text{(test)}}_M \in\bGamma$, the online algorithm must be used $M$ times.

To circumvent these issues, we propose an alternative method based on sparse grid interpolation. Interpolation possibly speeds up the computation of many online samples and circumvents any lack of inf-sup stability.

We begin in Section~\ref{sec:sg_params} by recalling the sparse grid interpolation notation and determining the relevant method's parameters. Then, in Section~\ref{sec:sg-rbp}, we define a method based on sparse grid interpolation and the projection of the
parameter-to-solution map on a reduced space.
We refer to~\cite{Bungartz2004Sparse, Nobile2016Convergence}, among others, for a general introduction to sparse grids.

\subsection{Sparse Grid Interpolation Notation and Parameters}\label{sec:sg_params}
In this section, we outline the sparse grid notation and relevant parameters.
In the following, we consider:
A \emph{1d nodes family} $(\YY_{\nu})_{\nu\in\N_0}$, which is a sequence of \emph{nodes tuples}
$\YY_{\nu} = \set{ y_1, \dots, y_{m(\nu)} }\subset\R$, 
i.e. sets of $m(\nu) \in \N$ nodes such that $\YY_0=\set{0}$ and $\#\YY_{\nu} \leq \#\YY_{\nu+1}$ for all $\nu\in\N_0$;
A \emph{1d interpolation method} $(I_{\nu})_{\nu\in\N_0}$, where $I_{\nu}$ interpolates continuous functions of one real variable over the nodes tuple $\YY_{\nu}$;
A $\emph{multi-index set}$ $\Lambda$, i.e.~a finite subset of $\N_0^s$ for $s\in\N$. 
$\Lambda$ is called \emph{downward-closed} if for all $\bnu\in\Lambda$, $\bnu-\be_{n}\in\Lambda$ for all $n=1,\dots,s$ such that $\nu_n>0$.

Based on the \emph{sparse grid construction}~\cite{Bungartz2004Sparse, Nobile2016Convergence} (which we do not recall here), the parameters above define uniquely a \emph{sparse grid} $\HH$, i.e.~a finite set of nodes in $\bGamma$ which generalizes tensor products of one-dimensional node tuples. 
For $\by\in\HH$, we denote by $L_{\by}$ the Lagrange interpolation basis function on $\HH$, which equals $1$ on $\by$ and 0 on $\HH\setminus\set{\by}$.
Finally, the \emph{sparse grid interpolation operator} applied to any continuous function $u$ on $\bGamma$ reads:
\begin{align*}
	\II[u](\by') = \sum_{\by\in\HH} u(\by) L_{\by}(\by'),\qquad \forall\by'\in\bGamma.
\end{align*}

We now recall the precise choice of method's parameters which were found in~\cite[Section~1.2.3, Section~7]{an2025sparse} to give a dimension-independent approximation rate of the pLLG parameter-to-solution map $\by\mapsto \bm(\by)$: 
The 1D nodes family is chosen, for any $\nu>0$, as
\begin{align*}
	\YY_{\nu} = \set{y_i \coloneqq \phi \left( -1+\frac{2i}{\left(2^{\nu+1}-1\right)+1} \right) \text{ for } i = 1, \ldots, m(\nu) = 2^{\nu+1}-1},
\end{align*}
where $\phi(x) \coloneqq 2 \sqrt{2} {\erf}^{-1}(x)$ for $x\in(-1,1)$ and $\erf(x) = \frac{2}{\sqrt{\pi}}\int_0^x e^{-t^2} \rmd t$ denotes the \emph{error function}. Intuitively, $\phi: (-1, 1) \rightarrow \R$ transforms uniformly distributed nodes into standard normally distributed ones. Using $m(\nu) = 2^{\nu+1}-1$ nodes guarantees that the nodes family is nested as $\nu\in\N_0$ increases.
The {1D interpolation operator} $I_{\nu}: C^0(\R)\rightarrow C^0(\R)$ is defined as the constant $I_0 f \equiv f(0)$, if $\nu=0$, and a continuous piecewise-polynomial interpolation if $\nu \geq 1$. In particular, it consists of polynomial interpolation of degree $p\geq 1$ in the intervals defined by the 1d nodes, and extension of the nearby polynomial otherwise.
The multi-index set $\Lambda$ is defined using a \emph{profit function} $\PP: \N_0^s \rightarrow \R_+$.
In particular, $\Lambda$ must contain all multi-indices $\bnu\in \N_0^s$ with profit above a given threshold $\eps>0$, i.e. 
$\Lambda = \Lambda(\eps) = \setc{\bnu\in\N_0^s}{\PP_{\bnu} > \eps}$.
The profit function reads:
\begin{align*}
    \PP_{\bnu} = \prod_{i : \nu_i=1} 2^{-\frac{3}{2}\lceil\log_2(i)\rceil} \prod_{i : \nu_i>1} \left( 2^{\nu_i + \frac{1}{2}\lceil\log_2(i)\rceil} \right)^{-p} \left( \prod_{i:\nu_i\geq1} p2^{\nu_i} \right)^{-1}.
\end{align*}
This profit function was defined in~\cite{an2025sparse} based on a regularity analysis of the parameter-to-solution map and guarantees that  $\Lambda$ is downward-closed for any value of the profit threshold.

\subsection{Sparse Grid-Reduced Basis Projection (SG-RBP)}\label{sec:sg-rbp}
In this section, we describe an alternative surrogate modeling approach for the pLLG equation based on sparse grid interpolation of reduced basis coefficients. 
This method leverages the reduced basis constructed in the offline phase and uses sparse grid interpolation with parameters defined in Section~\ref{sec:sg_params} to efficiently approximate the parametric dependence of the magnetization field.

\begin{algorithm}[H]
	\caption{SG-RBP: Sparse grid interpolation of the reduced basis coefficients}
	\label{algo:SG-RBP}
	\hspace*{\algorithmicindent} {\bf Input:} $\HH$, $N_T$, $\bX^{\text{(rb)}}_K$, $\by'$, $(L_{\by})_{\by\in\HH}$ \\
	\hspace*{\algorithmicindent} {\bf Output:} $\left(\II[\bm_K^n](\by')\right)_{n=0}^{N_T}$
	\begin{algorithmic}[1] 
		\For{$\by \in \HH$}
			\State Compute $\left(\bm_h^n(\by)\right)_{n=0}^{N_T}$ with the high fidelity tangent plane scheme (Algorithm~\ref{algo:TPScheme})
		\EndFor
		\For{$n = 1, \dots, N_T$ and $\by\in\HH$}
			\State Compute $\bm_K^n(\by) = \mathsf{P}^{\text{(rb)}}_K \bm_h^n(\by)$
		\EndFor
		\For{$n = 1, \dots, N_T$}
			\State Compute $\II[\bm_K^n](\by') = \displaystyle\sum_{\by\in\HH} \bm_K^n(\by) L_{\by}(\by')$
		\EndFor
	\end{algorithmic}
\end{algorithm}

This method presents some advantages over the Galerkin POD-TPS scheme from Section~\ref{sec:RB_LLG}:
It avoids online Galerkin computations, thus circumventing possible lacks of stability; and, compared to sparse grid interpolation of the high fidelity samples (without reduced basis projection), the dimensionality of the interpolated coordinates vectors is smaller, which results in an enormous speedup.

The convergence of the sparse grid interpolation of the \emph{high-fidelity} LLG samples was studied in~\cite{an2025sparse}. The result~\cite[Theorem~7.8]{an2025sparse} carries over, as shown in the next proposition.
\begin{proposition}
    Consider the parameter-to-solution map $\by\mapsto \bm(\by)$ arising from the pLLG equation~\eqref{eq:pLLG} but assuming that 
    $\bH_{\text{eff}}(W, \bm) = \widehat{\CC}(W, \bm) = W \bm \times \Delta \bg$.
    Additionally, assume that the initial condition is Hölder continuous, i.e. $\bm^0 \in C^{2+\delta}(D)^3$ for some $0<\delta<1$ with sufficiently small $C^{2+\delta}(D)^3$-seminorm.
    Denote by $\mathsf{P}^{\normalfont\text{(rb)}}_K$ the orthogonal projection operator onto the magnetization reduced space, as described in Section~\ref{sec:RB_LLG}.
    Denote by $\II$ the sparse grid interpolation operator defined from the parameters as in Section~\ref{sec:sg_params} and by $\HH\subset \bGamma$ the corresponding sparse grid.
    
    Then, for any $\frac{2}{3} < \tau < 1$, there exists $C=C(\tau, p)>0$ such that
    \begin{align}
        \norm{\bm_{K} - \II[\bm_K]}_{L^2_{\mu} (\bGamma, L^2(I, H^1(D)^3))}
        \leq C \left(\#\HH\right)^{1-1/\tau}.
    \end{align}
    
\end{proposition}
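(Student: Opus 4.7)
The plan is to reduce the statement to the already-established sparse-grid convergence result for the high-fidelity parameter-to-solution map, namely \cite[Theorem~7.8]{an2025sparse}, by exploiting the linearity of both the sparse grid interpolation operator and the reduced-basis projection.

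First, I would observe that $\mathsf{P}^{\text{(rb)}}_K$ is a bounded linear operator from $H^1(D)^3$ onto $\boldsymbol{X}^{\text{(rb)}}_K$ that does not depend on the parameter $\by\in\bGamma$. The sparse grid interpolant $\II$ acts pointwise in $\by$ through the Lagrange basis $(L_{\by})_{\by\in\HH}$, namely
\begin{equation*}
    \II[\bm_K^n](\by') = \sum_{\by\in\HH} \bm_K^n(\by) L_{\by}(\by') = \sum_{\by\in\HH} \mathsf{P}^{\text{(rb)}}_K \bm^n(\by) L_{\by}(\by') = \mathsf{P}^{\text{(rb)}}_K \II[\bm^n](\by'),
\end{equation*}
where I used linearity of $\mathsf{P}^{\text{(rb)}}_K$ to pull it out of the sum. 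Hence $\bm_K - \II[\bm_K] = \mathsf{P}^{\text{(rb)}}_K (\bm - \II[\bm])$ pointwise in $(\by,t)$.

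Next, since $\mathsf{P}^{\text{(rb)}}_K$ is the orthogonal projection onto $\boldsymbol{X}^{\text{(rb)}}_K \subset H^1(D)^3$ with respect to the $H^1(D)^3$ inner product, it is non-expansive, i.e. $\|\mathsf{P}^{\text{(rb)}}_K \bphi\|_{H^1(D)^3} \leq \|\bphi\|_{H^1(D)^3}$ for all $\bphi\in H^1(D)^3$. Integrating this pointwise estimate in time over $I$ and in parameter over $\bGamma$ against $\mu$ yields
\begin{equation*}
    \norm{\bm_K - \II[\bm_K]}_{L^2_{\mu}(\bGamma, L^2(I,H^1(D)^3))} \leq \norm{\bm - \II[\bm]}_{L^2_{\mu}(\bGamma, L^2(I,H^1(D)^3))}.
\end{equation*}

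Finally, I would invoke \cite[Theorem~7.8]{an2025sparse} applied to the simplified effective field $\widehat{\CC}(W,\bm) = W\bm\times\Delta\bg$ under the stated Hölder regularity of $\bm^0$: for any $\tfrac{2}{3} < \tau < 1$, there exists $C=C(\tau,p)>0$ such that the right-hand side above is bounded by $C(\#\HH)^{1-1/\tau}$. Chaining these two estimates concludes the proof. The main (and essentially the only) obstacle is verifying that the hypotheses of \cite[Theorem~7.8]{an2025sparse} apply verbatim to our setting; this amounts to checking the smallness condition on the $C^{2+\delta}(D)^3$-seminorm of $\bm^0$ and confirming that the choice of sparse grid parameters in Section~\ref{sec:sg_params} matches that reference, after which the commutation-and-contraction argument above is routine.
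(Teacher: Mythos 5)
Your proof is correct and reaches the same conclusion as the paper, but by a mildly different mechanism. The paper also reduces everything to \cite[Theorem~7.8]{an2025sparse} via linearity of $\mathsf{P}^{\text{(rb)}}_K$, but it does so by transferring the \emph{hypotheses} of that theorem: the required bounds on the mixed parametric derivatives of $\by\mapsto\bm(\by)$ (as established in \cite[Proposition~6.5]{an2025sparse}) are inherited by the projected map $\by\mapsto\mathsf{P}^{\text{(rb)}}_K\bm(\by)$, since a parameter-independent bounded linear operator preserves such derivative bounds, and the interpolation theorem is then applied directly to $\bm_K$. You instead use only the \emph{conclusion} of the theorem for the high-fidelity map and combine it with the commutation $\II[\mathsf{P}^{\text{(rb)}}_K\bm]=\mathsf{P}^{\text{(rb)}}_K\II[\bm]$ and the non-expansiveness of the orthogonal projection in $H^1(D)^3$. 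Both arguments hinge on the same fact (linearity and $H^1$-boundedness of the projection), and both are valid; yours has the small advantage of not needing to know that the cited theorem's assumptions are formulated as mixed-derivative bounds, while the paper's variant gives the marginally stronger statement that the theorem applies verbatim to $\bm_K$ itself rather than merely dominating its error by the high-fidelity interpolation error. The remaining check you flag—that the simplified effective field $\widehat{\CC}(W,\bm)=W\bm\times\Delta\bg$ and the smallness of the $C^{2+\delta}(D)^3$-seminorm of $\bm^0$ match the assumptions of \cite[Theorem~7.8]{an2025sparse}, and that the sparse grid parameters coincide with those of Section~\ref{sec:sg_params}—is exactly the same check the paper implicitly makes, so no gap remains.
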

\begin{proof}
    In~\cite[Theorem~7.8]{an2025sparse}, the convergence of sparse grid interpolation is proved assuming the regularity of the parameter-to-solution map $\by\mapsto \bm(\by)$. 
    More precisely, they assume bounds on the mixed derivatives of the parameter-to-solution map like the ones proved in~\cite[Proposition 6.5]{an2025sparse}. Since here we interpolate the reduced basis projection $\bm^n_K(\by) = \mathsf{P}^{\text{(rb)}}_K \bm^n_h(\by)$, a \emph{linear} function of $\bm^n_h(\by)$ for all $n=1, \dots, N_T$, the same estimates hold.
    As a consequence, also the same sparse grid interpolation convergence rates hold.
\end{proof}

The previous proposition requires strong assumption on the pLLG problem we are solving. In particular, no external magnetic field can be included in the effective field. Moreover, the simplified expression of $\widehat{\CC}(W, \bm)$ can only be a good approximation of Equation~\eqref{eq:pLLG} in the short time and uniform noise regime, i.e., $T \ll 1$ and $\nabla \bg\approx \boldsymbol{0}$.

It should be noted that the constant $C=C(p, \tau)$ appearing in the convergence estimate depends on problem parameters. The algebraic convergence with respect to the number of sparse grid nodes, with rate at best $\frac{1}{2}$ for $\tau=\frac{2}{3}$, may be visible only for large sparse grids in the asymptotic regime.

\section{Numerical Results}
\label{sec:numerical_results}
In this section, we test the algorithms introduced in Sections~\ref{sec:RB_LLG} and~\ref{sec:sg}. 
The Python implementation of both methods is based on
DOLFINx~\cite{Baratta2023DOLFINx} for the finite elements computations and 
SGMethods~\cite[Chapter 4]{scaglioni2024thesis} for sparse grid interpolation.
The rest, including the POD-TPS method, is implemented from scratch based on standard libraries such as NumPy and SciPy. 

The present section is structured as follows:
In Section~\ref{sec:setup}, we describe the general setup for the numerical experiments presented in the following two sections.
In Section~\ref{sec:numerics_1d}, we consider a test problem with a one-dimensional parameter space.
In Section~\ref{sec:numerics_nd}, we consider high parametric dimensions.
In Section~\ref{sec:numerics_switching}, we test the algorithms on a switching dynamics, which is a less academic example than those considered in the previous sections.

\subsection{Setup}\label{sec:setup}

In Sections~\ref{sec:numerics_1d} and~\ref{sec:numerics_nd} below, we consider examples of the parametric LLG equation (see Section~\ref{sec:model_LLG}) based on the following common 
setting: The physical domain $D$ is given by
\begin{equation}\label{eq:domain_D}
    D 
    \coloneqq
    \left\{
        \bx\in\R^3: 0\leq x_1 \leq 1, 0\leq x_2 \leq 1,\ x_3=0
    \right\}
\end{equation}
and the time interval is $I=[0, T]$ with $T=0.5$.
As an initial condition, we consider 
\begin{equation}
    \bm^0 
    \coloneqq
    \left(0.9 \left(x_1-\frac{1}{2}\right), 0.9 \left(x_2-\frac{1}{2}\right), \sqrt{1-(0.9r)^2}\right)^\top,
\end{equation}
where $r^2 = \left(x_1-\frac{1}{2}\right)^2+\left(x_2-\frac{1}{2}\right)^2$,
and set the Gilbert damping to $\alpha = 1.4$. In addition,
we consider the following spatial noise distribution 
\begin{equation}
    \bg 
    \coloneqq
    \left(
        0.5\sin\left(\pi \left(x_1-\frac{1}{2}\right)\right), 
		0.5\sin\left(\pi \left(x_2-\frac{1}{2}\right)\right), 
		\sqrt{1- g_0^2 - g_1^2 }
	\right)^\top. 
\end{equation}
The external magnetic field is set to zero, $\bH_{\text{ext}} \equiv \mathbf{0}$.

The space and time discretization parameters change across the different sets of numerical results. 
Unless explicitly stated, we use:
Piecewise linear finite elements on structured triangular meshes (see Figure~\ref{fig:mesh}, left) consisting of 128 elements and a mesh size $h=0.125$, order 1 BDF time stepping, and time step size $\tau = 10^{-3}$.
See Figure~\ref{fig:mesh}, center, for a plot of $\bm^0$, and right, for a sample magnetization at final time $t=T$.
\begin{figure}
	\centering
	\includegraphics[width=0.33\textwidth,trim={10 10 10 10},clip]{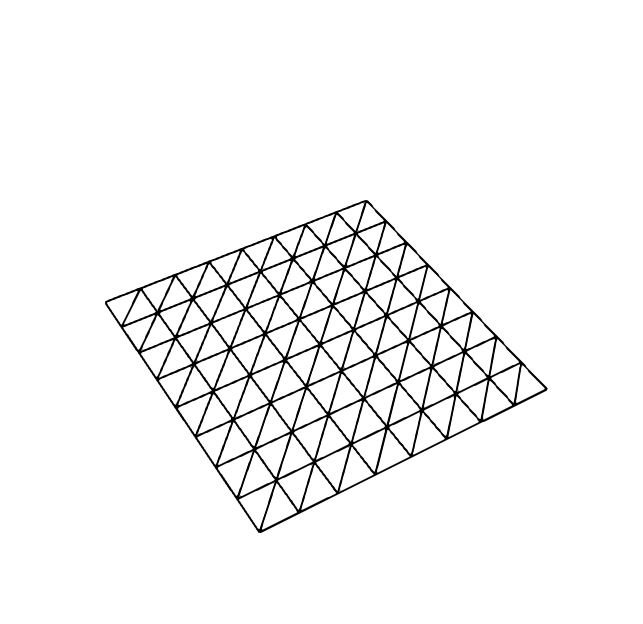}\hspace{-0.2cm}
	\includegraphics[width=0.33\textwidth,trim={10 10 10 10},clip]{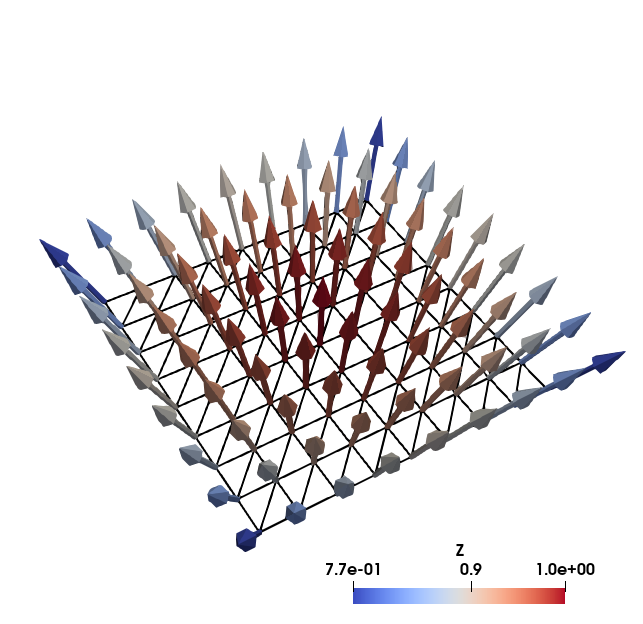}\hspace{-0.2cm}
	\includegraphics[width=0.33\textwidth,trim={10 10 10 10},clip]{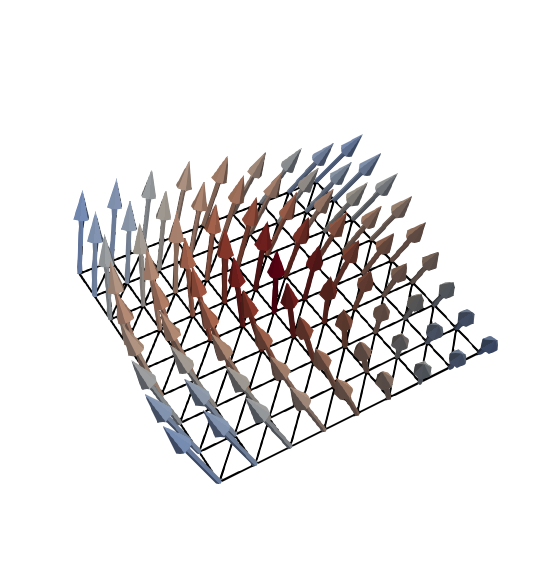}
	\caption{Left: The domain (2D unit square in the $xy$ plane) and an example of mesh used for the numerical tests.
	Center and Right: A snapshot of the random magnetization respectively at times $t=0$ and $t=1$ and colored with the $z$ component.}
	\label{fig:mesh}
\end{figure}

Throughout this section, we consider $N_S = 128$ samples $\set{\by^{(1)}, \dots, \by^{(N_S)}} \subset \bGamma$ in the parameter distributed according to a standard Normal distribution. The high-fidelity samples are computed using the TPS as introduced in Section~\ref{sec:tangent_plane_scheme}, in particular using the saddle point formulation stated in \eqref{eq:saddle_point_for}.
This results in $N_S$ velocities, Lagrange multipliers, and magnetizations sample paths, which in turn are understood as a sequence of elements in the finite element space over the time steps. 
Following Section~\ref{eq:empirical_POD}, we construct
reduced spaces $\bV^{\text{(rb)}}_{J}$, $\boldsymbol{X}^{\text{(rb)}}_{K}$ and $W^{\text{(rb)}}_R$ (of dimensions $J$, $K$, and $R$)
for the velocity, magnetization, and the Lagrange multiplier, respectively 
(cf. Remark \ref{rmk:POD_other}).

In order to assess the accuracy of the RB method and the variants introduced in this work, we consider the following error metrics.

\begin{itemize}
\item[(i)] {\bf Projection Error.}
We consider a collection of test points
$\left\{\by^{\text{(test)}}_1,\cdots,\by^{\text{(test)}}_{M}\right\}$
and define
\begin{equation}\label{eq:error_metric}
    \eps^{\text{Proj}}_{\bm,K}
    =
    \sqrt{
    \frac{1}{M N_T}
    \sum_{m=1}^{M}
    \sum_{n=1}^{N_T} 
    \norm{
        \bm^n_h\left(\by^{\text{(test)}}_m\right)
        - 
        \mathsf{P}_{K} \bm^n_h\left(\by^{\text{(test)}}_m\right)
    }^2_{H^1(D)^3}
    },
\end{equation}
where $\mathsf{P}_{K}: H^1(D)^3 \rightarrow \boldsymbol{X}^{\text{(rb)}}_{K}$ is the $H^1(D)^3$-based projection into $\boldsymbol{X}^{\text{(rb)}}_{K}$.
Observe that \eqref{eq:error_metric} approximates the $L^2_{\mu}(\bGamma, L^2(I, H^1(D)^3))$ error.
Equivalently, we can define $\eps_{\bv,R}$ and $\eps_{\lambda,R}$ for the
Lagrange multiplier on the corresponding reduced spaces. 
\item[(ii)] {\bf Galerkin POD Error.}
We consider the Galerkin POD error, defined for the magnetization field as 
\begin{equation}
    \eps^{\text{G-POD}}_{\bm,J}
    =
    \sqrt{
    \frac{1}{M N_T}
    \sum_{m=1}^{M}
    \sum_{n=1}^{N_T} 
    \norm{
        \bm^n_h\left(\by^{\text{(test)}}_m\right)
        - 
        \bm^n_J \left(\by^{\text{(test)}}_m\right)
    }^2_{H^1(D)^3}
    },
\end{equation}
where $\bm^n_J(\by) \in \bV^{\text{(rb)}}_{J}$ is
the output of the Galerkin POD-TPS stated in Algorithm \ref{algo:POD-TPS-Reduced-Basis}, in particular computed as in \eqref{eq:update_m_RB_TPS}.
If in Algorithm \ref{algo:POD-TPS-Reduced-Basis} we use the stabilized reduced space
$\bV^{\text{(rb)},\text{{stab}}}_J$ introduced in \eqref{eq:stabilized_space_velocity},
then we adopt the notation $\eps^{\text{G-POD,Stab}}_{\bm,J,K,R}$ for the Galerkin POD error,
which reflects the fact that the dimension of this space not only depends on $J$ but also on 
the dimension of magnetization and Lagrange multiplier reduced spaces $K$ and $R$.
\end{itemize}

In the following and for the numerical results
presented herein, we use $M=30$ and sample each component of the $M$ test points as i.i.d. standard Normal random variables in each component.

\subsection{1-dimensional Parameter Space} \label{sec:numerics_1d}
We consider first a simplified example in which the parameter space is $\bGamma = \R$, which 
is equivalent to considering a single parametric dimension.
The aim of this section is to demonstrate the performance of the reduced basis approach in a simple setting and to validate the implementation.

\subsubsection{Offline Phase}
Together with the error metrics introduced in Section \ref{sec:setup},
we assess the performance of the reduced basis by inspecting the decay of the singular values of the POD for each of these quantities. 
Figure~\ref{fig:svs_proj} portrays the singular values 
obtained in the construction of the reduced spaces
and projection errors. The decay of the singular values and of the projection error suggests a good quality of the reduced basis and its suitability for the Galerkin approximation in the online phase.
Figure~\ref{fig:rbs} displays the first three reduced basis functions for the magnetization field.

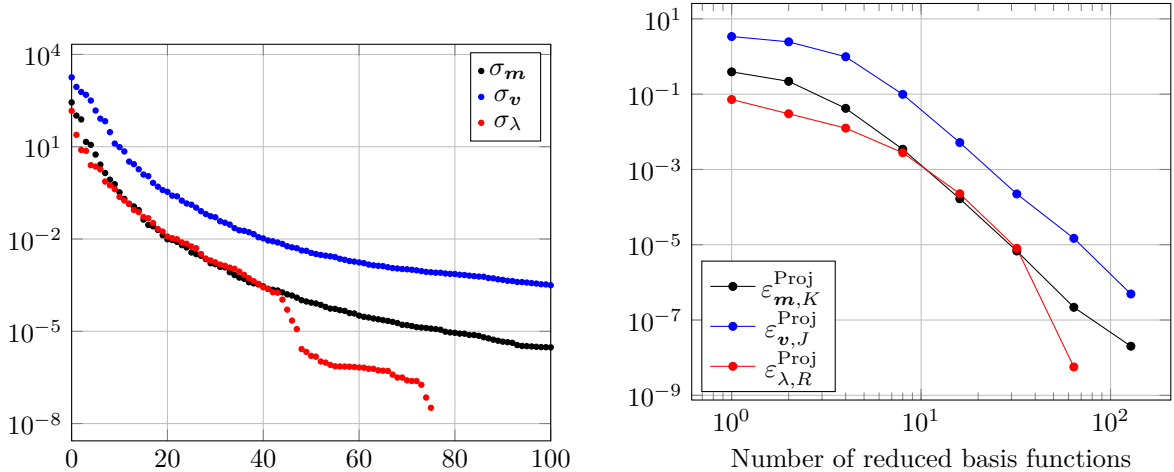
\begin{figure}
\center
    \begin{tikzpicture}
        \begin{semilogyaxis}[
            xmin=0, xmax=100,
            grid=major,
            width=0.51\linewidth
        ]
            \addplot[only marks, mark=*, mark size=1pt] table[col sep=comma, x expr=\coordindex, y index={0}]{data_plots/1d/singular_values_m.csv};
            \addlegendentry{$\sigma_{\bm}$}
            \addplot[only marks, mark=*, mark size=1pt, blue] table[col sep=comma, x expr=\coordindex, y index={0}]{data_plots/1d/singular_values_v.csv};
            \addlegendentry{$\sigma_{\bv}$}
            \addplot[only marks, mark=*, mark size=1pt, red] table[col sep=comma, x expr=\coordindex, y index={0}]{data_plots/1d/singular_values_l.csv};
            \addlegendentry{$\sigma_{\lambda}$}
        \end{semilogyaxis}
    \end{tikzpicture}
    \hspace{0.02\textwidth}
    \begin{tikzpicture}
        \begin{loglogaxis}[
            grid=major,
            xlabel={Number of reduced basis functions},
            width=0.51\linewidth
            , legend style={at={(0.02,0.02)}, anchor=south west}
        ]
            \addplot[ mark=*, mark size=1.5pt] table[col sep=comma, x index={0}, y index={1}]{data_plots/1d/conv_proj_m.csv};
            \addlegendentry{$\eps^{\text{Proj}}_{\bm,K}$}
            \addplot[mark=*, mark size=1.5pt, blue] table[col sep=comma, x index={0}, y index={1}]{data_plots/1d/conv_proj_v.csv};
            \addlegendentry{$\eps^{\text{Proj}}_{\bv,J}$}
            \addplot[mark=*, mark size=1.5pt, red] table[col sep=comma, x index={0}, y index={1}]{data_plots/1d/conv_proj_l.csv};
            \addlegendentry{$\eps^{\text{Proj}}_{\lambda,R}$}
        \end{loglogaxis}
    \end{tikzpicture}
    \caption{Left: First 100 singular values of velocities ($\sigma_{\bv}$), Lagrange multipliers ($\sigma_{\lambda}$), and magnetizations ($\sigma_{\bm}$) based on $N_S = 128$ snapshots. Right: Projection errors onto the reduced spaces for velocities, Lagrange multipliers, and magnetizations with respect to the number of reduced basis functions.}
    \label{fig:svs_proj}
\end{figure}

\begin{figure}
	\centering
	\includegraphics[width=0.32\textwidth,trim=40 20 40 20,clip]{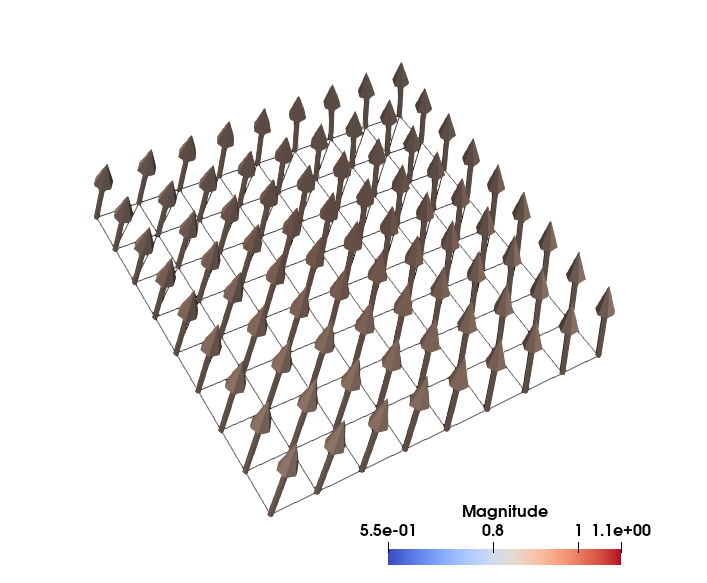}
	\hspace{-0.5em}
	\includegraphics[width=0.32\textwidth,trim=40 20 40 20,clip]{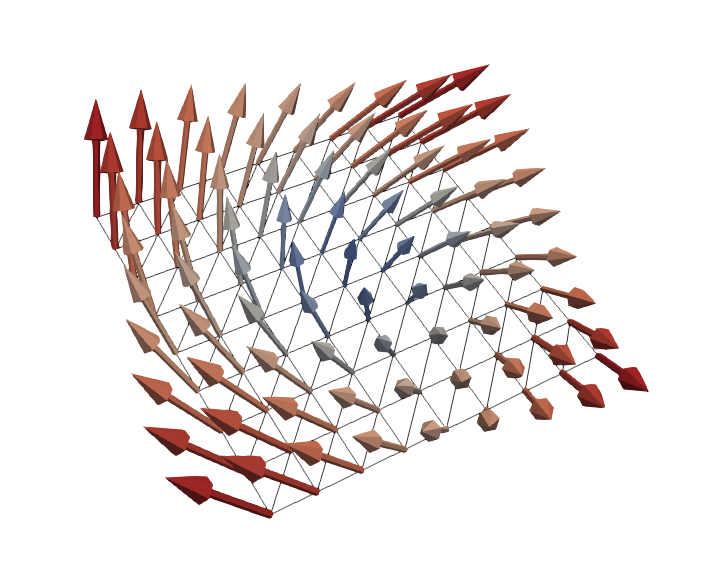}
	\hspace{-0.5em}
	\includegraphics[width=0.32\textwidth,trim=10 10 10 20,clip]{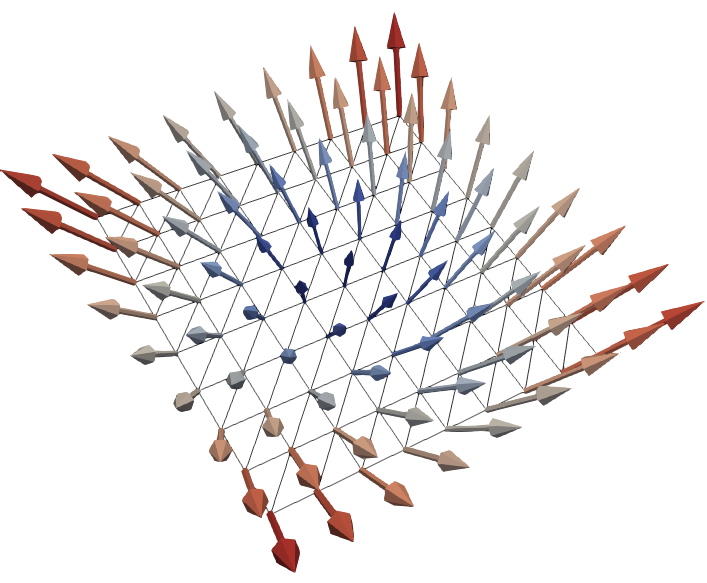}
	\caption{First three magnetization reduced basis functions. The three vector fields are color-coded with the magnitude. Each reduced basis function is defined only up to a constant. Therefore, they are scaled for ease of readability.}
	\label{fig:rbs}
\end{figure}

\subsubsection{Online Phase}
Next, we present a comprehensive set of 
numerical experiments addressing the performance of the Galerkin POD-TPS method in the online phase.

\subsubsection*{Number of Reduced Basis Functions and Supremizer Stabilization}
We numerically investigate the convergence of the reduced basis approximation 
of the magnetization field with respect to the total number of reduced bases used for velocities and Lagrange multipliers.
More precisely, we compare the performance of the following four choices:
\begin{itemize}
	\item \textbf{Online Galerkin-Equal (OG-1x).} 
    We consider the computation of the magnetization field in the online phase 
    using the Galerkin POD-TPS as introduced in Algorithm \ref{algo:POD-TPS-Reduced-Basis} while employing the same number of reduced basis functions for velocities and Lagrange multipliers, i.e. $\text{dim}(\bV_J^{\text{(rb)}}) = \text{dim}(W_R^{\text{(rb)}})$, therefore $J = R$ in this case.
	\item \textbf{Online Galerkin-3x factor (OG-3x).}
    As in the previous case, we consider the computation of the magnetization field in the online phase using the Galerkin POD-TPS as introduced in Algorithm \ref{algo:POD-TPS-Reduced-Basis} while using three times as many reduced basis functions for velocities as for Lagrange multipliers, i.e. $\text{dim}(\bV_J^{\text{(rb)}}) = 3 \text{dim}(W_R^{\text{(rb)}})$, therefore $J = 3 R$ in this case. 
	\item \textbf{Supremizer-stabilized OG-1x (SS-OG-1x)}: 
    We consider the computation of the magnetization field in the online phase 
    according to Algorithm \ref{algo:POD-TPS-Reduced-Basis}. However, unlike in the two 
    previous approaches, we use $\bV^{\text{(rb)},\text{{stab}}}_{J,K,R}$ as in 
    \eqref{eq:stabilized_space_velocity} for the approximation of the velocity field,
    which in this case is enriched in total with
    at most $J$ supremizers, with $K = \lfloor \sqrt{J}\rfloor $ and
    $R = \lfloor \sqrt{J}\rfloor $.
	\item \textbf{Supremizer-stabilized OG-3x (SS-OG-3x).}
    Again, we use $\bV^{\text{(rb)},\text{{stab}}}_{J,K,R}$ as in 
    \eqref{eq:stabilized_space_velocity} instead of $\bV^{\text{(rb)}}_{J}$ for the approximation of the velocity field. However, now we enriched 
    reduced space for the velocity field in total with at most $J$ supremizers.
    These are distributed following
    $K = \lfloor\frac{ \sqrt{J}}{3}\rfloor $ and
    $R = 3K$.
\end{itemize}
Figure~\ref{fig:err_inf_sup_online_galerkin} presents the results for these four methods. In particular, we present the convergence of the Galerkin POD error (left) and the smallest 
inf-sup constant of the saddle point problem across all time steps. 
Concerning these results, we make the following observations:
\textbf{OG-1x} is clearly not inf-sup stable and the error does not diminish as the number of reduced basis functions increases. 
\textbf{OG-3x} has an inf-sup constant that is closer to the one of the high-fidelity model and displays better convergence. 
The addition of the supremizer stabilization always improves inf-sup stability, in particular for \textbf{SS-OG-1x}, which now also converges.
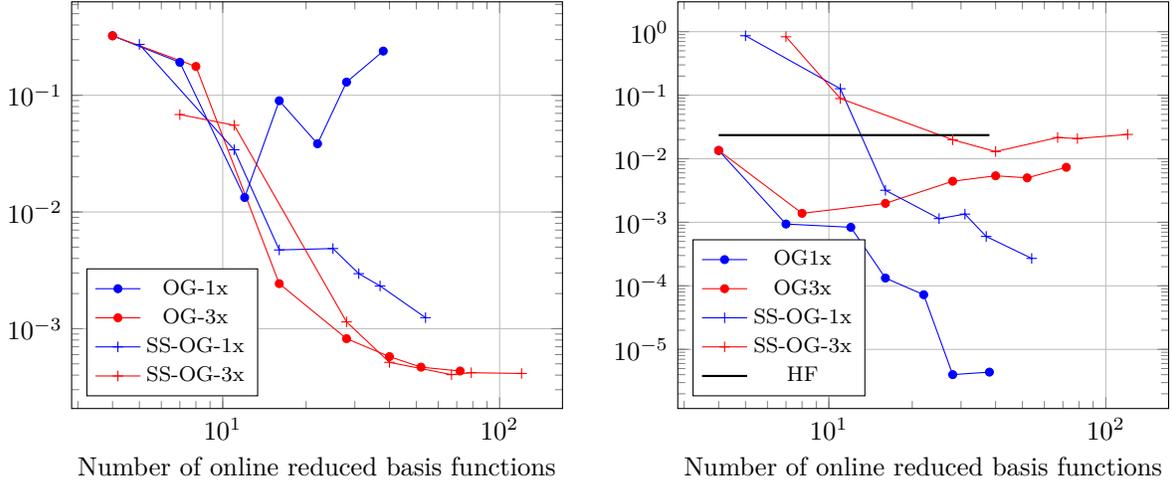
\begin{figure}
	\center
    \begin{tikzpicture}
        \begin{loglogaxis}[
            width=0.52\linewidth,
            grid=major,
            xlabel={Number of online reduced basis functions},
            legend style={ font=\footnotesize}, 
            legend pos = south west
        ]
            \addplot[mark=*, mark size=1.5pt, blue] table[col sep=comma, x expr=\thisrowno{1}+\thisrowno{2}, y index={4}
            ]{data_plots/1d/conv_test_OG1x.csv};
            \addlegendentry{OG-1x}
            \addplot[mark=*, mark size=1.5pt, red] table[col sep=comma, x expr=\thisrowno{1}+\thisrowno{2}, y index={4}
            ]{data_plots/1d/conv_test_OG3x.csv};
            \addlegendentry{OG-3x}
            \addplot[mark=+, mark size=2pt, blue] table[col sep=comma, x expr=\thisrowno{1}+\thisrowno{3}, y index={4}
            ]{data_plots/1d/conv_test_SSOG1x.csv};
            \addlegendentry{SS-OG-1x}
            \addplot[mark=+, mark size=2pt, red] table[col sep=comma, x expr=\thisrowno{1}+\thisrowno{3}, y index={4}
            ]{data_plots/1d/conv_test_SSOG3x.csv};
            \addlegendentry{SS-OG-3x}
        \end{loglogaxis}
    \end{tikzpicture}
    \hspace{0.02\textwidth}
    \begin{tikzpicture}
        \begin{loglogaxis}[
            grid=major,
            xlabel={Number of online reduced basis functions},
            legend style={font=\footnotesize}, 
            legend pos = south west,
            width=0.52\linewidth
        ] 
            \addplot[mark=*, mark size=1.5pt, blue] table[col sep=comma, x expr=\thisrowno{1}+\thisrowno{2}, y index={5}
            ]{data_plots/1d/conv_test_OG1x.csv};
            \addlegendentry{OG1x}
            \addplot[mark=*, mark size=1.5pt, red] table[col sep=comma, x expr=\thisrowno{1}+\thisrowno{2}, y index={5}
            ]{data_plots/1d/conv_test_OG3x.csv};
            \addlegendentry{OG3x}
            \addplot[mark=+, mark size=2pt, blue] table[col sep=comma, x expr=\thisrowno{1}+\thisrowno{3}, y index={5}
            ]{data_plots/1d/conv_test_SSOG1x.csv};
            \addlegendentry{SS-OG-1x}
            \addplot[mark=+, mark size=2pt, red] table[col sep=comma, x expr=\thisrowno{1}+\thisrowno{3}, y index={5}
            ]{data_plots/1d/conv_test_SSOG3x.csv};
            \addlegendentry{SS-OG-3x}
            \addplot[thick] table[col sep=comma, x expr=\thisrowno{1}+\thisrowno{2}, y index={6}
            ]{data_plots/1d/conv_test_OG1x.csv};
            \addlegendentry{HF}
        \end{loglogaxis}
    \end{tikzpicture}
	\caption{Comparison of the four online Galerkin POD-TPS strategies: 
		\textbf{OG-1x}, 
		\textbf{OG-3x},
		and the respective versions with supremizer stabilization, \textbf{SS-OG-1x} and \textbf{SS-OG-3x}.
		Left: Galerkin POD errors $\eps^{\text{G-POD}}_{\bm,J}$ and $\eps^{\text{G-POD,Stab}}_{\bm,J,K,R}$ of the magnetization field as a function of the total number of reduced basis functions.
		Right: Minimum inf-sup constant over time-steps as a function of the total number of reduced basis functions. In black, the minimum inf-sup constant over the time steps of the high fidelity solver.}
	\label{fig:err_inf_sup_online_galerkin}
\end{figure}

The stagnation of the error displayed by the convergent methods is a consequence of the constant time step size, as we show next.

\subsubsection*{Convergence Under Online Time Step Refinement}

Our conjecture is that the lack of convergence of \textbf{OG-3x}, \textbf{SS-OG-1x} and \textbf{SS-OG-3x} in the previous experiments is due to the time step size used in the online phase, through the consistency errors of the BDF extrapolation and time derivative approximation.

To test this, we again use the same reduced bases for velocities, Lagrange multipliers, and magnetizations as in the previous experiment (computed with a relatively large time step).
However, we perform the online phase with different time step sizes.

Figure~\ref{fig:err_infsup_convergence_dt} shows, for each value of the time step $\tau$ used in the online phase, the error of the reduced model with 
reduced basis of a dimension given according to the Remark \ref{eq:criterium}  with $\epsilon_{\normalfont\text{POD}}^2 = 10^{-5}$ also guaranteeing that $\text{dim}(\bV_J^{\text{(rb)}}) \geq 3 \text{dim}(W_R^{\text{(rb)}})$. This results in $\text{dim}(\bV_J^{\text{(rb)}})=30$ and $\text{dim}(W_R^{\text{(rb)}})=10$.
We use a reference solution computed with the high-fidelity model with smaller time steps than those considered in the online phase.
The error converges with a rate close to 1 while the inf-sup constant is independent of the time step size. Similar results hold when the supremizer stabilization is employed.
\begin{figure}
	\center
    \begin{tikzpicture}
        \begin{loglogaxis}[
            grid=major,
            xlabel={Time step size $\tau$},
            legend style={at={(0.97,0.03)}, anchor=south east},
            width=0.49\linewidth
        ]
            \addplot[mark=*, mark size=1.5pt, blue] table[col sep=comma, x index={0}, y index={3}
            ]{data_plots/1d/conv_test_dt.csv};
            \addlegendentry{OG-3x}
            \addplot[black, thick, domain=3e-3:1.e-1, samples=100] {1.2*x};
            \addlegendentry{$\tau$}
        \end{loglogaxis}
    \end{tikzpicture}
    \hspace{0.02\textwidth}
    \begin{tikzpicture}
        \begin{loglogaxis}[
            grid=major,
            xlabel={Time step size $\tau$},
            legend style={at={(0.97,0.03)}, anchor=south east},
            ymin=1.e-3, ymax=1.e-1,
            width=0.49\linewidth
            ]
            \addplot[mark=*, mark size=1.5pt, blue] table[col sep=comma, x index={0}, y index={4}
            ]{data_plots/1d/conv_test_dt.csv};
            \addlegendentry{OG-3x}
            \addplot[thick] table[col sep=comma, x index={0}, y index={5}
            ]{data_plots/1d/conv_test_dt.csv};
            \addlegendentry{HF}
        \end{loglogaxis}
    \end{tikzpicture}
	\caption{Study of \textbf{OG-3x} for different time step sizes (used only in online phase).
	Left: Error as a function of the time step size.
	Right: Minimum inf-sup constant over time-steps as a function of the time step size.}
	\label{fig:err_infsup_convergence_dt}
\end{figure}
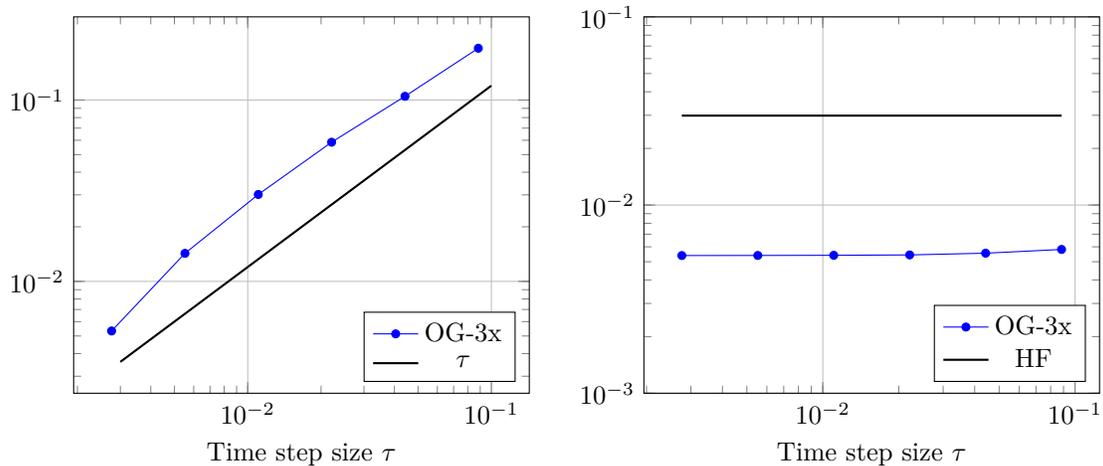

\subsubsection*{Convergence Under Snapshots Mesh Refinement}
We study the effect of the mesh size (used to compute the snapshots) on the performance of the Galerkin POD-TPS method described in Algorithm \ref{algo:POD-TPS-Reduced-Basis}.
In particular, we study the effect of the mesh size on the Galerkin POD error and on the inf-sup stability of the method at each time step.

For the high-fidelity model, $h$-refinement leads to order 1 convergence of the $L^2_{\mu}(\bGamma, L^2(I, H^1(D)^3))$ error and an order 1 decrease of the minimum inf-sup constant of the TPS linear systems.
We confirm this with a convergence test on 4 meshes on the unit square (as in Section~\ref{sec:setup}) with decreasing mesh size 
$h= \left\{2^{-2}, \ldots, 2^{-5}\right\}$.
Results are displayed in Figure~\ref{fig:err_infsup_h}.

We then test the Galerkin POD-TPS algorithm as follows: We again fix the time step size to $\tau=10^{-3}$ and
for each of the meshes of size $2^{-2}$ to $2^{-5}$, we carry out the offline phase (snapshots sampling and POD) and convergence tests for the \textbf{OG-3x} and \textbf{SS-OG-3x} algorithms. We compute the dimension of the reduced space by following Remark \ref{eq:criterium}  with $\epsilon_{\normalfont\text{POD}}^2  = 10^{-5}$.

This results in the number of reduced basis functions and supremizers listed in the following table:
\begin{center}
    \begin{tabular}{l|c|c|c}
        $h$ & $\text{dim}(W_R^{\text{(rb)}})$ & $\text{dim}(\bV_J^{\text{(rb)}})$  & $\text{dim}\left(\bV_{J,K,R}^{\text{(rb), stab}}\right)$ \\
        \hline
        $2^{-2}$ & 9 & 27 & 54\\
        $2^{-3}$ & 10 & 30 & 57\\
        $2^{-4}$ & 12 & 36 & 63\\
        $2^{-5}$ & 10 & 30 & 57
    \end{tabular}
\end{center}
The mesh of size $2^{-6}$ is reserved for the reference solution, computed on $M$ new test parameters (different from the ones used for the snapshots).
The approximation error consists in this case of two contributions: 
One from the finite element discretization (the snapshots are coarser than the reference), the other from the reduced basis approximation.
See Figure~\ref{fig:err_infsup_h}.
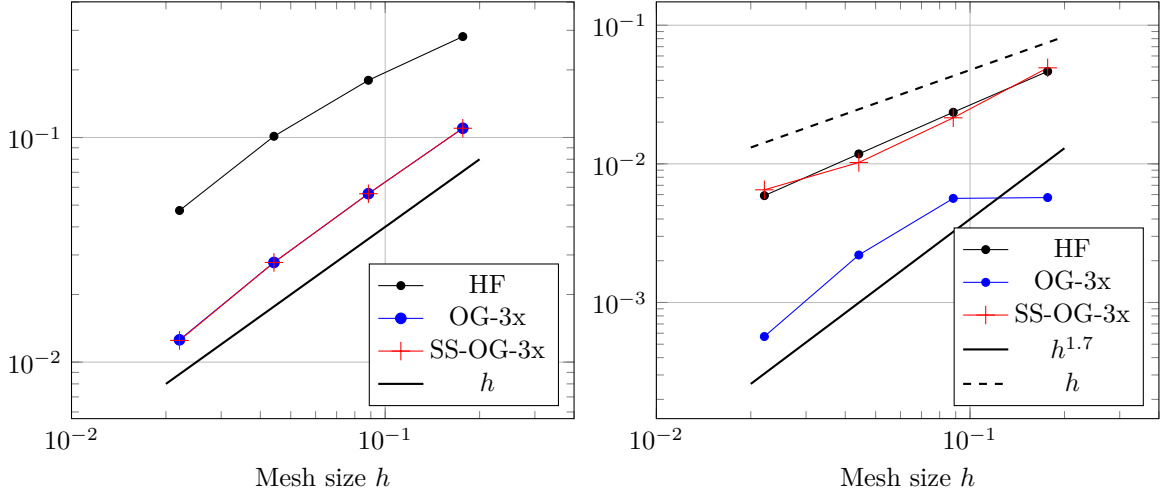
\begin{figure}
	\center
    \begin{tikzpicture}
        \begin{loglogaxis}[
                grid=major,
                xlabel={Mesh size $h$},
                legend style={at={(0.97,0.03)}, anchor=south east},
                xmin=1.e-2, xmax=4.e-1,
                width=0.53\linewidth
            ]
            \addplot[mark=*, mark size=1.5pt, black] table[col sep=comma, x index={0}, y index={2}
            ]{data_plots/1d/conv_test_HF_vs_h.csv};
            \addlegendentry{HF}
            \addplot[mark=*, mark size=2pt, blue] table[col sep=comma, x index={0}, y index={5}
            ]{data_plots/1d/conv_test_OG3x_vs_h.csv};
            \addlegendentry{OG-3x}
            \addplot[mark=+, mark size=3.5pt, red] table[col sep=comma, x index={0}, y index={5}
            ]{data_plots/1d/conv_test_SSOG3x_vs_h.csv};
            \addlegendentry{SS-OG-3x}
            \addplot[black, thick, domain=2e-2:2e-1, samples=100] {0.4*x};
            \addlegendentry{$h$}
        \end{loglogaxis}
    \end{tikzpicture}
    \begin{tikzpicture}
        \begin{loglogaxis}[
            grid=major,
            xlabel={Mesh size $h$},
            legend style={at={(0.97,0.03)}, anchor=south east},
            xmin=1.e-2, xmax=4.e-1,
            width=0.53\linewidth,
        ]
            \addplot[mark=*, mark size=1.5pt, black] table[col sep=comma, x index={0}, y index={3}
            ]{data_plots/1d/conv_test_HF_vs_h.csv};
            \addlegendentry{HF}
            \addplot[mark=*, mark size=1.5pt, blue] table[col sep=comma, x index={0}, y index={6}
            ]{data_plots/1d/conv_test_OG3x_vs_h.csv};
            \addlegendentry{OG-3x}
            \addplot[mark=+, mark size=3.5pt, red] table[col sep=comma, x index={0}, y index={6}
            ]{data_plots/1d/conv_test_SSOG3x_vs_h.csv};
            \addlegendentry{SS-OG-3x}
            \addplot[black, thick, domain=2e-2:2e-1, samples=100] {0.2*x^1.7};
            \addlegendentry{$h^{1.7}$}
            \addplot[black, thick, dashed, domain=2e-2:2e-1, samples=100] {0.3*x^0.8};
            \addlegendentry{$h$}
        \end{loglogaxis}
    \end{tikzpicture}
	\caption{Convergence test for high-fidelity (finite element based) tangent plane scheme (HF), \textbf{OG-3x} and \textbf{SS-OG-3x} under $h$-refinement of the snapshots.
	Left: Error with respect to the mesh size.
	Right: Minimum inf-sup constant (of the linear systems solved over time steps) with respect to the mesh size.}
	\label{fig:err_infsup_h}
\end{figure}

\subsection{High-dimensional Parameter Space} \label{sec:numerics_nd}
In this section, we consider a parameter space of dimension
higher than one, i.e. $\bGamma = \R^s$ for some $s\in\N$. 
It should be noted that choosing a finite $s$ is equivalent to truncating the Lévy-Ciesielski expansion~\eqref{eq:LCP} to its first $s$ terms. 
Assuming that $s$ is large enough, the resulting \emph{truncation error} is negligible compared to other error contributions.

Other than the dimension of the parameter space, we consider the same setting  (i.e., domain $D$, final time $T$, initial condition $\bm^0$, Gilbert damping parameter $\alpha$, spatial noise distribution $\bg$, external magnetic field $\bH_{\text{ext}}$) and discretization parameters (time step size, mesh size, discretization orders in time and space) as in Section~\ref{sec:setup}.

\subsubsection{Parametric Dimension-Robustness of Galerkin POD-TPS}
We test the robustness of the POD-TPS algorithm with respect to the parameter space dimension. 
To this end, we consider parametric dimensions $s=1, 10, 100$.
For each value of $s$, we sample $N_S = 128$ i.i.d. standard Normal random vectors $\by^{(1)}, \dots, \by^{(N_S)} \in \bGamma$. 
Then, using the TPS as a high-fidelity model, we compute the corresponding snapshots for the magnetization, its velocity, and the Lagrange multipliers.
Again, following Section \ref{eq:empirical_POD}, we compute
reduced bases using POD for three involved quantities.

The singular values, displayed in the left column of Figure~\ref{fig:svs_proj_err_vs_param_dim}, tend to decay more slowly as $s$ increases. However, for $s = 10$ and $s = 100$ these are closer to each other than to the ones for $s = 1$. This behavior suggests convergence of the singular values as the parametric dimension increases. 

We compare the accuracy of the reduced spaces for the 
magnetizations, velocities and Lagrange multipliers and
for different parametric dimensions by computing the
projection error as defined in Section \ref{eq:error_metric}.
As in the previous section, the error computation is based on $M=30$ test parameter samples $\set{\by^{\text{(test)}}_1, \dots, \by^{\text{(test)}}_M} \in \bGamma$ (different from the ones used for the snapshots).

In the right column of Figure~\ref{fig:svs_proj_err_vs_param_dim} we plot, for the previously selected values of $s$, the projection error as a function of the reduced space dimension. As per usual, this is repeated for magnetization, its velocities, and the Lagrange multipliers. 
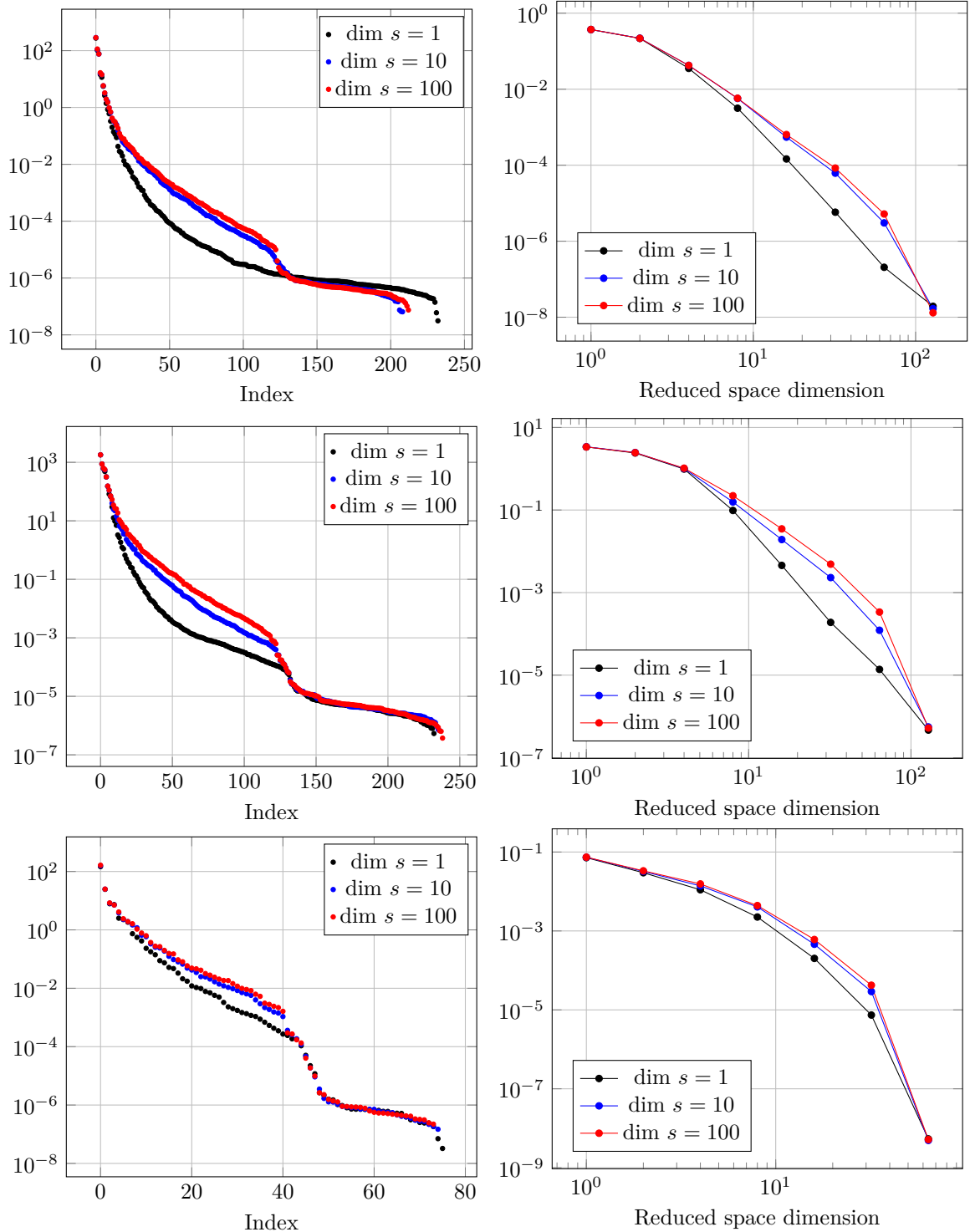
\begin{figure}
	\center
    %
    %
    \begin{tikzpicture}
        \begin{semilogyaxis}[
            grid=major,
            width=0.52\linewidth,
            xlabel={Index},
            ]
            \addplot[only marks, mark=*, mark size=1pt, color=black] table[col sep=comma, x expr=\coordindex, y index={0}]{data_plots/ND/singular_values_m_dim_y_1.csv};
            \addlegendentry{$\text{dim } s = 1$}
            \addplot[only marks, mark=*, mark size=1pt, color=blue] table[col sep=comma, x expr=\coordindex, y index={0}]{data_plots/ND/singular_values_m_dim_y_10.csv};
            \addlegendentry{$\text{dim } s = 10$}
            \addplot[only marks, mark=*, mark size=1pt, color=red] table[col sep=comma, x expr=\coordindex, y index={0}]{data_plots/ND/singular_values_m_dim_y_100.csv};
            \addlegendentry{$\text{dim } s = 100$}
        \end{semilogyaxis}
    \end{tikzpicture}
    \begin{tikzpicture}
        \begin{loglogaxis}[
            grid=major,
            xlabel={Reduced space dimension},
            width=0.52\linewidth,
            legend style={at={(0.05,0.05)}, anchor=south west}
            ]
            \addplot[mark=*, mark size=1.5pt] table[col sep=comma, x index={0}, y index={1}]{data_plots/ND/conv_proj_m_dim_y_1.csv};
            \addlegendentry{$\text{dim } s = 1$}
            \addplot[mark=*, mark size=1.5pt, blue] table[col sep=comma, x index={0}, y index={1}]{data_plots/ND/conv_proj_m_dim_y_10.csv};
            \addlegendentry{$\text{dim } s = 10$}
            \addplot[mark=*, mark size=1.5pt, red] table[col sep=comma, x index={0}, y index={1}]{data_plots/ND/conv_proj_m_dim_y_100.csv};
            \addlegendentry{$\text{dim } s = 100$}
        \end{loglogaxis}
    \end{tikzpicture}
    %
    \begin{tikzpicture}
        \begin{semilogyaxis}[
            grid=major,
            width=0.52\linewidth,
            xlabel={Index},
            ]
            \addplot[only marks, mark=*, mark size=1pt, color=black] table[col sep=comma, x expr=\coordindex, y index={0}]{data_plots/ND/singular_values_v_dim_y_1.csv};
            \addlegendentry{$\text{dim } s = 1$}
            \addplot[only marks, mark=*, mark size=1pt, color=blue] table[col sep=comma, x expr=\coordindex, y index={0}]{data_plots/ND/singular_values_v_dim_y_10.csv};
            \addlegendentry{$\text{dim } s = 10$}
            \addplot[only marks, mark=*, mark size=1pt, color=red] table[col sep=comma, x expr=\coordindex, y index={0}]{data_plots/ND/singular_values_v_dim_y_100.csv};
            \addlegendentry{$\text{dim } s = 100$}
        \end{semilogyaxis}
    \end{tikzpicture}
    \begin{tikzpicture}
        \begin{loglogaxis}[
            grid=major,
            xlabel={Reduced space dimension},
            width=0.52\linewidth,
            legend style={at={(0.05,0.05)}, anchor=south west}
            ]
            \addplot[mark=*, mark size=1.5pt] table[col sep=comma, x index={0}, y index={1}]{data_plots/ND/conv_proj_v_dim_y_1.csv};
            \addlegendentry{$\text{dim } s = 1$}
            \addplot[mark=*, mark size=1.5pt, blue] table[col sep=comma, x index={0}, y index={1}]{data_plots/ND/conv_proj_v_dim_y_10.csv};
            \addlegendentry{$\text{dim } s = 10$}
            \addplot[mark=*, mark size=1.5pt, red] table[col sep=comma, x index={0}, y index={1}]{data_plots/ND/conv_proj_v_dim_y_100.csv};
            \addlegendentry{$\text{dim } s = 100$}
        \end{loglogaxis}
    \end{tikzpicture}
    %
    \begin{tikzpicture}
        \begin{semilogyaxis}[
            grid=major,
            width=0.52\linewidth,
            xlabel={Index},
            ]
            \addplot[only marks, mark=*, mark size=1pt, color=black] table[col sep=comma, x expr=\coordindex, y index={0}]{data_plots/ND/singular_values_l_dim_y_1.csv};
            \addlegendentry{$\text{dim } s = 1$}
            \addplot[only marks, mark=*, mark size=1pt, color=blue] table[col sep=comma, x expr=\coordindex, y index={0}]{data_plots/ND/singular_values_l_dim_y_10.csv};
            \addlegendentry{$\text{dim } s = 10$}
            \addplot[only marks, mark=*, mark size=1pt, color=red] table[col sep=comma, x expr=\coordindex, y index={0}]{data_plots/ND/singular_values_l_dim_y_100.csv};
            \addlegendentry{$\text{dim } s = 100$}
        \end{semilogyaxis}
    \end{tikzpicture}
    \begin{tikzpicture}
        \begin{loglogaxis}[
            grid=major,
            xlabel={Reduced space dimension},
            width=0.52\linewidth,
            legend style={at={(0.05,0.05)}, anchor=south west}
            ]
            \addplot[mark=*, mark size=1.5pt] table[col sep=comma, x index={0}, y index={1}]{data_plots/ND/conv_proj_l_dim_y_1.csv};
            \addlegendentry{$\text{dim } s = 1$}
            \addplot[mark=*, mark size=1.5pt, blue] table[col sep=comma, x index={0}, y index={1}]{data_plots/ND/conv_proj_l_dim_y_10.csv};
            \addlegendentry{$\text{dim } s = 10$}
            \addplot[mark=*, mark size=1.5pt, red] table[col sep=comma, x index={0}, y index={1}]{data_plots/ND/conv_proj_l_dim_y_100.csv};
            \addlegendentry{$\text{dim } s = 100$}
        \end{loglogaxis}
    \end{tikzpicture}
	\caption{Robustness of the POD for parametric dimensions $s = 1,10,100$. Left column: Singular values resulting from the POD of magnetizations, their velocities, and Lagrange multipliers respectively. Right column: Projection errors as a function of the number of reduced basis function for magnetizations, their velocities, and Lagrange multipliers respectively.}
	\label{fig:svs_proj_err_vs_param_dim}
\end{figure}

\subsubsection{Convergence of Sparse Grid-Reduced Basis Projections}

We test the SG-RBP method introduced in 
Section \ref{sec:sg} on the parametric LLG equation with final time $T=0.2$. The rest of the problem's data remains as in Section~\ref{sec:setup}. We compute the high-fidelity solution with time step $\tau=10^{-3}$ and mesh size $h=0.125$.

In Figure~\ref{fig:results_sg_proj_HF}, we study the approximation error (with respect to $M=30$ high-fidelity samples) and the number of active sparse grid dimensions as the number of sparse grid nodes increases. We also vary the number of reduced basis functions by truncating the reduced basis generated offline with different tolerances following Remark \ref{eq:criterium}.
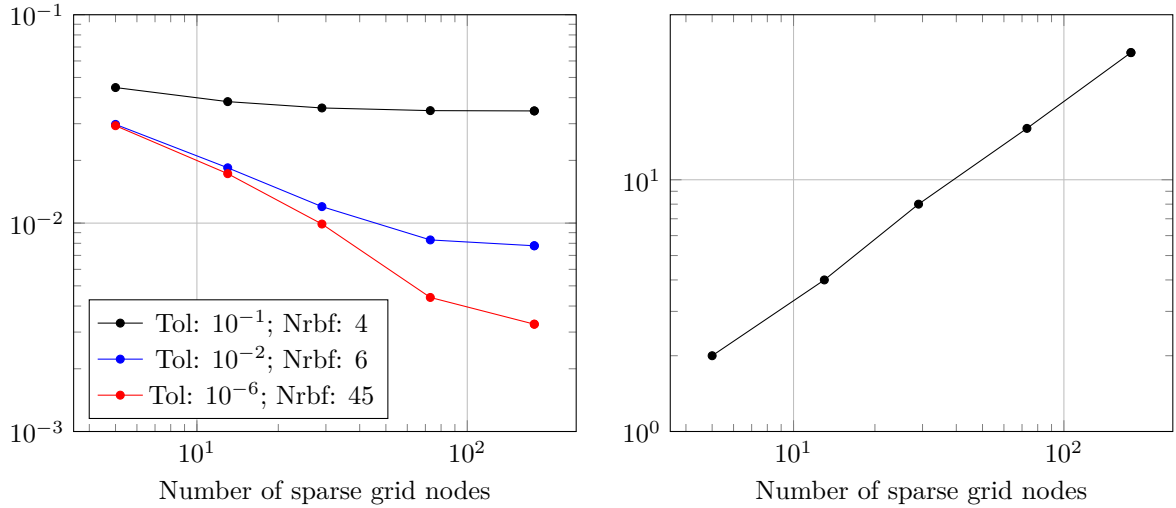
\begin{figure}
    \begin{tikzpicture}
        \begin{loglogaxis}[
            grid=major,
            xlabel={Number of sparse grid nodes},
            legend pos=south west,
            ymin=1.e-3, ymax=1.e-1,
            width=0.53\linewidth
        ]
            \addplot[mark=*, mark size=1.5pt, black] table[col sep=comma, x index={0}, y index={3}
            ]{data_plots/ND/convergence_sg_tol_1e-1.csv};
            \addlegendentry{Tol: $10^{-1}$; Nrbf: 4}
            \addplot[mark=*, mark size=1.5pt, blue] table[col sep=comma, x index={0}, y index={3}
            ]{data_plots/ND/convergence_sg_tol_1e-2.csv};
            \addlegendentry{Tol: $10^{-2}$; Nrbf: 6}
            \addplot[mark=*, mark size=1.5pt, red] table[col sep=comma, x index={0}, y index={3}
            ]{data_plots/ND/convergence_sg_tol_1e-6.csv};
            \addlegendentry{Tol: $10^{-6}$; Nrbf: 45 }
        \end{loglogaxis}
    \end{tikzpicture}
    \hspace{0.02\textwidth}
    \begin{tikzpicture}
        \begin{loglogaxis}[
            grid=major,
            xlabel={Number of sparse grid nodes},
            ymin=1.e0,
            ytick={1.e0, 1.e1},
            width=0.53\linewidth
        ]
            \addplot[mark=*, mark size=1.5pt, black] table[col sep=comma, x index={0}, y index={1}
            ]{data_plots/ND/convergence_sg_tol_1e-1.csv};
        \end{loglogaxis}
    \end{tikzpicture}
	\caption{Convergence of SG-RBP. 
    Left: Error with respect to the number of sparse grid nodes, for different magnetization reduced basis tolerance (Tol) and corresponding number of reduced basis functions (Nrbf).
    Right: Number of effective dimensions of the sparse grids as function of the number of sparse grid nodes.}
	\label{fig:results_sg_proj_HF}
\end{figure}

As expected, the error has an \emph{additive} structure with respect to sparse grid and reduced basis parameters. 
This is apparent because the sparse grid error dominates until the reduced basis accuracy is reached, at which point a stagnation is visible.

\subsection{A Switching Example with Non-constant Initial Condition}\label{sec:numerics_switching}
In this section, we carry out a numerical study of the methods introduced above in a ``random switching'' setup, which is both challenging and relevant in applications. See e.g.~\cite{banas2013computational, scholz2001Micromagnetic} for a more detailed treatment of the topic.

We consider a physical domain $D$ as in \eqref{eq:domain_D},
final time $T=1$, Gilbert damping parameter $\alpha=1.4$,
initial condition
\begin{equation}
	\bm^0(\bx) 
	= 
	\left(
		C (x_1 - 0.5),\;
		C (x_2 - 0.5),\;
		\sqrt{1 - C^2 (x_1 - 0.5)^2 - C^2 (x_2 - 0.5)^2}
	\right)^\top,
\end{equation}
where $C = 0.9$, spacial noise distribution
\begin{equation}
	\bg(\bx) 
	= 
	\left(
		0.45 \sin(\pi x_1),\;
		0.45 \sin(\pi x_2),\;
		\sqrt{1 - 0.45^2 \sin^2(\pi x_1) - 0.45^2 \sin^2(\pi x_2)}
	\right)^\top,
\end{equation}
and external magnetic field $\bH(\bx) = (0, 0, -1)$.

In Figure~\ref{fig:ex_sample_paths_switch}, we show two sample paths of the Brownian motion and the corresponding magnetization at three fixed time stamps.
\begin{figure}
\centering
\setlength{\tabcolsep}{0pt} 
\renewcommand{\arraystretch}{0} 
\begin{tabular}{cccc}
	\begin{tikzpicture}
		\begin{axis}[
			width=0.259\linewidth,
			xlabel={$t$},
			xmin=0, xmax=1,
			ymin=-1.5, ymax=2.1,
			grid=major,
			thick,
			axis line style={-},
			axis x line=bottom,
			axis y line=left,
			xtick={0,0.5,1},
		]
			\addplot+[blue, mark=none] table [x index={0}, y index={1}, col sep=comma] {data_plots/switch_ncIC/metrics_tt_seed_2.csv};
		\end{axis}
	\end{tikzpicture}
	&
	\includegraphics[width=0.259\linewidth,trim=20 0 20 0,clip]{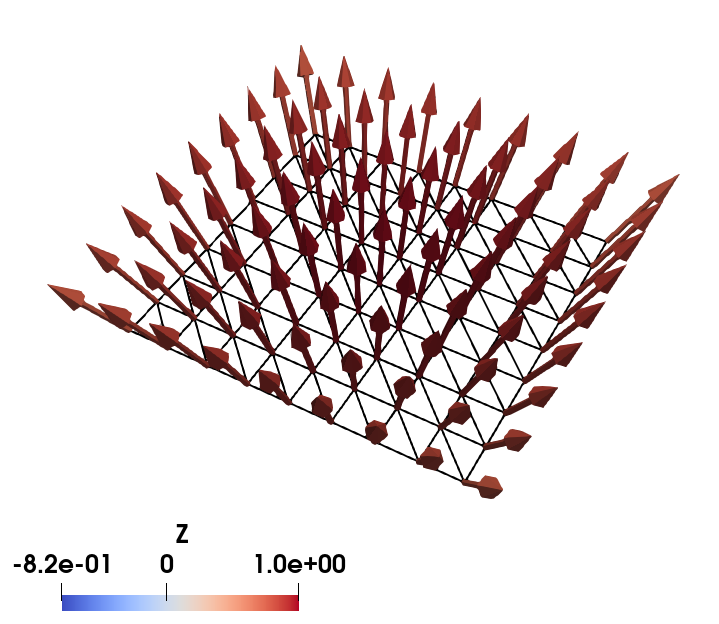}
	&
	\includegraphics[width=0.259\linewidth,trim=20 0 20 0,clip]{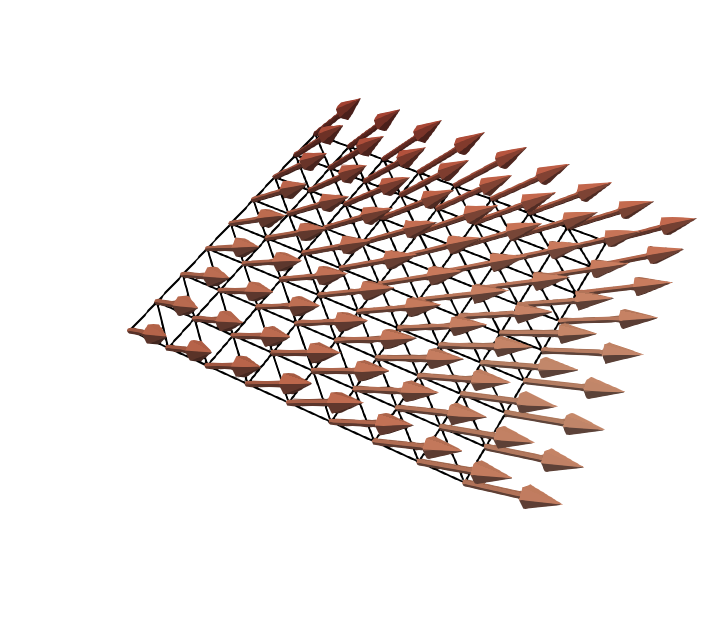}
	&
	\includegraphics[width=0.259\linewidth,trim=20 0 20 0,clip]{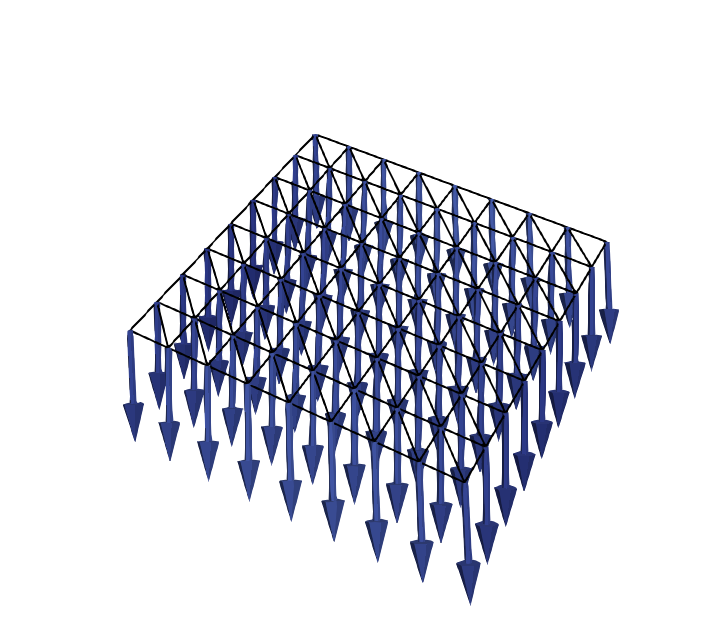}
	\\
	\begin{tikzpicture}
		\begin{axis}[
			width=0.259\linewidth,
			height=0.25\textwidth,
			xlabel={$t$},
			xmin=0, xmax=1,
			ymin=-1.5, ymax=2.1,
			grid=major,
			thick,
			axis line style={-},
			axis x line=bottom,
			axis y line=left,
			xtick={0,0.5,1},
		]
			\addplot+[blue, mark=none] table [x index={0}, y index={1}, col sep=comma] {data_plots/switch_ncIC/metrics_tt_seed_3.csv};
		\end{axis}
	\end{tikzpicture}
	&
	\includegraphics[width=0.259\linewidth, trim=20 0 20 0,clip]{data_plots/switch_ncIC/m_S2_t0.png}
	&
	\includegraphics[width=0.259\linewidth, trim=20 0 20 0,clip]{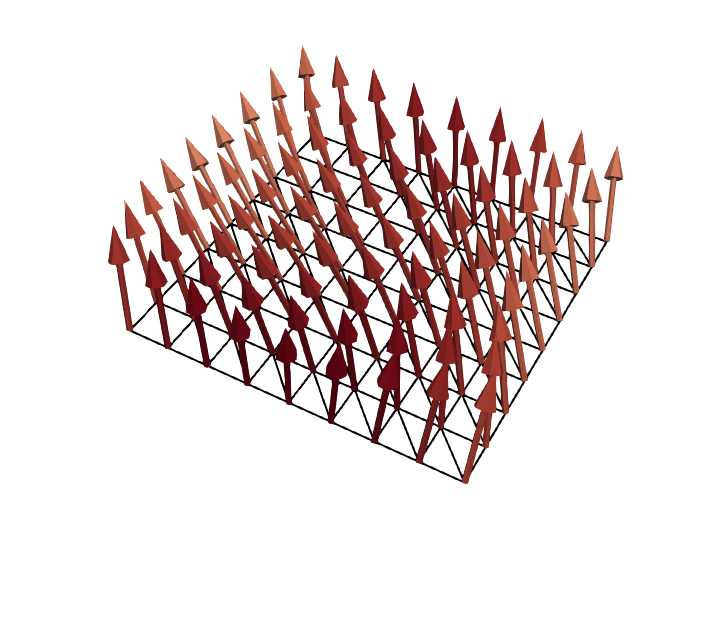}
	&
	\includegraphics[width=0.259\linewidth, trim=20 0 20 0,clip]{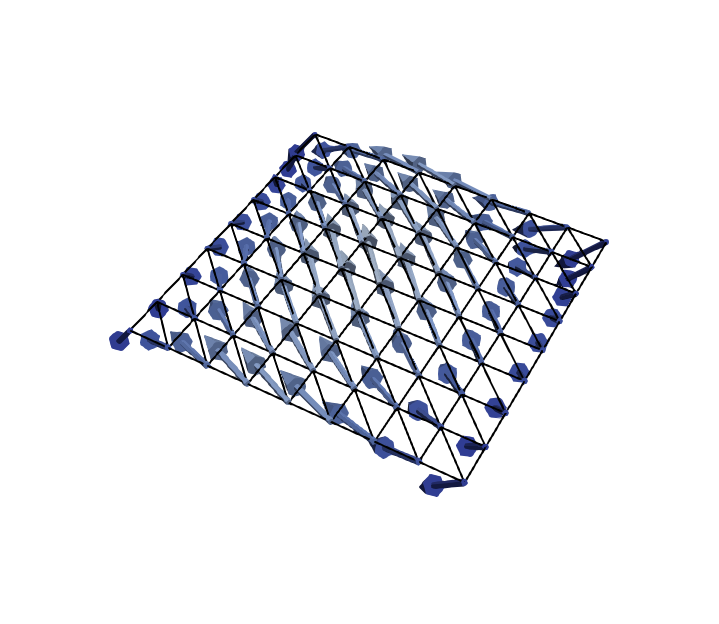}
\end{tabular}
\caption{In each row, we plot 1 sample path of the Brownian motion and the corresponding random magnetization in the switching setup at times $t = 0, 0.5, 1$. The magnetization is color-coded with the magnitude of the z component (blue corresponds to -1 and red to 1).}
\label{fig:ex_sample_paths_switch}
\end{figure}

We observe that the dynamic changes dramatically for different sample paths. In particular, the switching begins at different times and along different directions. 
Moreover, a slightly different perturbation close to the initial time can lead to dramatically different switching directions. This intuitively means that the SLLG switching dynamic does not depend continuously on the random perturbations, thus making the parametric dependence of the solution difficult to approximate. 

In Figure~\ref{fig:switch_cnIC_metrics}, we additionally show some basic metrics related to the time evolution of the magnetization field.

We use the setup described above as a benchmark to test the schemes we have previously introduced, i.e. Galerkin POD-TPS and SG-RBP.
\begin{figure}
	\definecolor{mycolor1}{rgb}{0.121,0.466,0.705}
	\definecolor{mycolor2}{rgb}{1.000,0.498,0.054}
	\definecolor{mycolor3}{rgb}{0.172,0.627,0.172}
	\definecolor{mycolor4}{rgb}{0.839,0.153,0.157}
	\definecolor{mycolor5}{rgb}{0.580,0.404,0.741}
	\definecolor{mycolor6}{rgb}{0.549,0.337,0.294}
	\definecolor{mycolor7}{rgb}{0.890,0.466,0.760}
	\definecolor{mycolor8}{rgb}{0.498,0.498,0.498}
	\definecolor{mycolor9}{rgb}{0.737,0.741,0.133}
	\definecolor{mycolor10}{rgb}{0.090,0.745,0.811}

    \begin{tikzpicture}
        \begin{axis}[
            xlabel={$t$},
            xmin=0, xmax=1, ymin=-1, ymax=1,
            grid=major,
            thick,
            axis line style={-},
            axis x line=bottom, axis y line=left,
            width=0.35\linewidth,
            tick label style={font=\scriptsize},
            label style={font=\scriptsize},
        ]
            \addplot[smooth, thick, no markers,color=mycolor1] table [x index={0}, y index={1}, col sep=comma] {data_plots/switch_ncIC/aver_mz_tt_HFsamples.csv};
            \addplot[smooth, thick, no markers,color=mycolor2] table [x index={0}, y index={2}, col sep=comma] {data_plots/switch_ncIC/aver_mz_tt_HFsamples.csv};
            \addplot[smooth, thick, no markers,color=mycolor3] table [x index={0}, y index={3}, col sep=comma] {data_plots/switch_ncIC/aver_mz_tt_HFsamples.csv};
            \addplot[smooth, thick, no markers,color=mycolor4] table [x index={0}, y index={4}, col sep=comma] {data_plots/switch_ncIC/aver_mz_tt_HFsamples.csv};
            \addplot[smooth, thick, no markers,color=mycolor5] table [x index={0}, y index={5}, col sep=comma] {data_plots/switch_ncIC/aver_mz_tt_HFsamples.csv};
            \addplot[smooth, thick, no markers,color=mycolor6] table [x index={0}, y index={6}, col sep=comma] {data_plots/switch_ncIC/aver_mz_tt_HFsamples.csv};
            \addplot[smooth, thick, no markers,color=mycolor7] table [x index={0}, y index={7}, col sep=comma] {data_plots/switch_ncIC/aver_mz_tt_HFsamples.csv};
            \addplot[smooth, thick, no markers,color=mycolor8] table [x index={0}, y index={8}, col sep=comma] {data_plots/switch_ncIC/aver_mz_tt_HFsamples.csv};
            \addplot[smooth, thick, no markers,color=mycolor9] table [x index={0}, y index={9}, col sep=comma] {data_plots/switch_ncIC/aver_mz_tt_HFsamples.csv};
            \addplot[smooth, thick, no markers,color=mycolor10] table [x index={0}, y index={10}, col sep=comma] {data_plots/switch_ncIC/aver_mz_tt_HFsamples.csv};
        \end{axis}
    \end{tikzpicture}
    \hfill
    \begin{tikzpicture}
        \begin{axis}[ 
            xlabel={$t$},
            xmin=0, xmax=1, ymin=0,
            grid=major,
            thick,
            axis line style={-},
            axis x line=bottom, axis y line=left,
            width=0.35\linewidth,
            tick label style={font=\scriptsize},
            label style={font=\scriptsize},
        ]
            \addplot[smooth, thick, no markers,color=mycolor1] table [x index={0}, y index={1}, col sep=comma] {data_plots/switch_ncIC/dirichlet_energy_tt_HFsamples.csv};
            \addplot[smooth, thick, no markers,color=mycolor2] table [x index={0}, y index={2}, col sep=comma] {data_plots/switch_ncIC/dirichlet_energy_tt_HFsamples.csv};
            \addplot[smooth, thick, no markers,color=mycolor3] table [x index={0}, y index={3}, col sep=comma] {data_plots/switch_ncIC/dirichlet_energy_tt_HFsamples.csv};
            \addplot[smooth, thick, no markers,color=mycolor4] table [x index={0}, y index={4}, col sep=comma] {data_plots/switch_ncIC/dirichlet_energy_tt_HFsamples.csv};
            \addplot[smooth, thick, no markers,color=mycolor5] table [x index={0}, y index={5}, col sep=comma] {data_plots/switch_ncIC/dirichlet_energy_tt_HFsamples.csv};
            \addplot[smooth, thick, no markers,color=mycolor6] table [x index={0}, y index={6}, col sep=comma] {data_plots/switch_ncIC/dirichlet_energy_tt_HFsamples.csv};
            \addplot[smooth, thick, no markers,color=mycolor7] table [x index={0}, y index={7}, col sep=comma] {data_plots/switch_ncIC/dirichlet_energy_tt_HFsamples.csv};
            \addplot[smooth, thick, no markers,color=mycolor8] table [x index={0}, y index={8}, col sep=comma] {data_plots/switch_ncIC/dirichlet_energy_tt_HFsamples.csv};
            \addplot[smooth, thick, no markers,color=mycolor9] table [x index={0}, y index={9}, col sep=comma] {data_plots/switch_ncIC/dirichlet_energy_tt_HFsamples.csv};
            \addplot[smooth, thick, no markers,color=mycolor10] table [x index={0}, y index={10}, col sep=comma] {data_plots/switch_ncIC/dirichlet_energy_tt_HFsamples.csv};
        \end{axis}
    \end{tikzpicture}
    \hfill
    \includegraphics[width=0.35\linewidth]{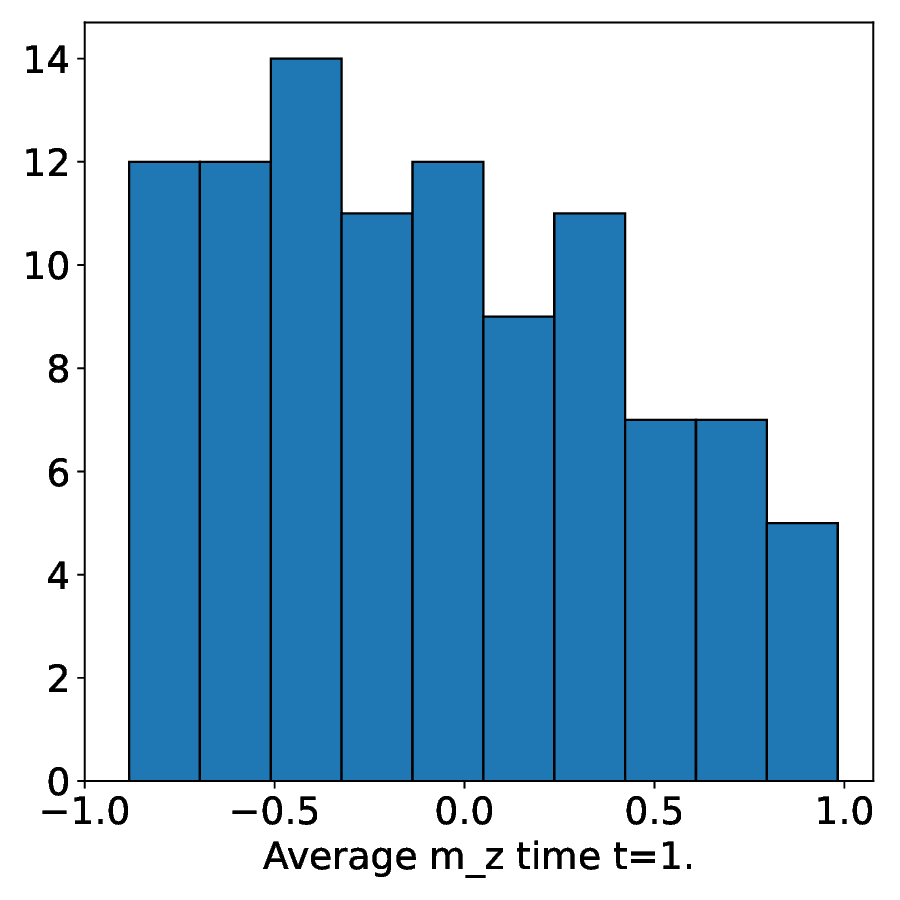}
	\caption{Metrics from repeated sampling of the SLLG dynamics (computed with high fidelity TPS).
		Left: Space-averaged $z$ component of the magnetization $\int_D m_z(\by, t, \bx) \text{d}x$ with respect to time $t\in I$;
		Center: Dirichlet energy $\int_D \seminorm{\nabla \bm(\by, t, \bx)}{}^2 \text{d}x$ with respect to time $t\in I$;
		Right: Histogram of the space-averaged $z$ component of the magnetization at final time $T$, $\int_D m_z(\by, T, \bx)\text{d}\bx$.
	} 
	\label{fig:switch_cnIC_metrics}
\end{figure}

The tolerance on the relative accuracy of the reduced basis is $\epsilon_{\normalfont\text{POD}}^2 = 10^{-6}$.
Both methods use the same reduced basis, computed with 
$N_S=64$ snapshots of the switching SLLG dynamics corresponding to the parameter vectors $\by\in \R^{100}$ with i.i.d. standard Normal components.
The snapshots are obtained with the high-fidelity TPS with 
order 1 BDF, time step $\tau=10^{-3}$, 
order 1 finite elements over a structured triangular mesh of $D$ with 128 elements and mesh size $h=0.125$.

The first five reduced basis functions for the magnetization and the singular values  for the magnetization, its velocity, and the Lagrange multipliers, are visualized in Figure~\ref{fig:5_rbfs_switch}. The reduced basis functions reflect the magnetization dynamics: The first is close to the initial condition. The following 4 have often small $z$ component and are oriented in different directions within the $xy$ plane. Overall, their linear combinations already give a good approximation of magnetizations with different orientations.

\begin{figure}
	\begin{minipage}[t]{0.34\linewidth}
		\vspace{0pt}
		\begin{tikzpicture}
			\begin{semilogyaxis}[
				xmin=0, xmax=100, 
				grid=major, 
				width=\linewidth, 
				height=1.5\linewidth,
				legend style={font=\small}
			]
				\addplot[only marks, mark=*, mark size=1.5pt] table[col sep=comma, x expr=\coordindex, y index={0}]{data_plots/switch_ncIC/singular_values_m.csv};
				\addlegendentry{Magnetization}
				\addplot[only marks, mark=*, mark size=1.5pt, blue] table[col sep=comma, x expr=\coordindex, y index={0}]{data_plots/switch_ncIC/singular_values_v.csv};
				\addlegendentry{Velocity}
				\addplot[only marks, mark=*, mark size=1.5pt, red] table[col sep=comma, x expr=\coordindex, y index={0}]{data_plots/switch_ncIC/singular_values_l.csv};
				\addlegendentry{Lagrange Mult's}
			\end{semilogyaxis}
		\end{tikzpicture}
	\end{minipage}%
	\hfill
	\begin{minipage}[t]{0.64\linewidth}
		\vspace{0pt}
		\includegraphics[width=0.49\textwidth, trim={0cm 0.5cm 0.8cm 2.5cm},clip]{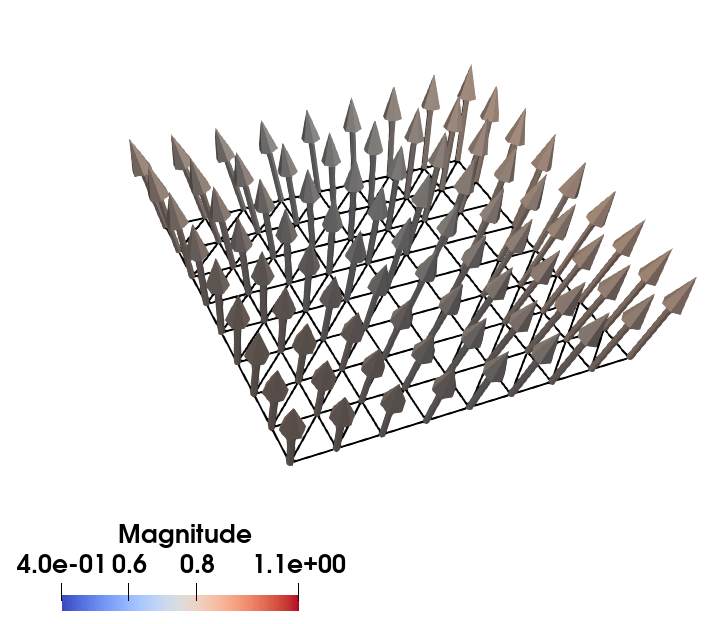}
		\includegraphics[width=0.49\textwidth, trim={2cm 0.5cm 0.8cm 2.5cm},clip]{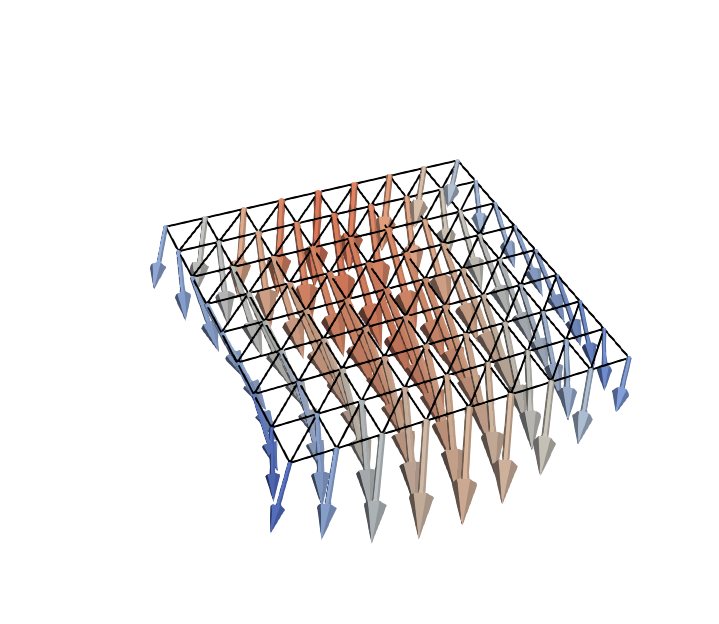}
		\includegraphics[width=0.49\textwidth, trim={0cm 0.5cm 0.8cm 2.5cm},clip]{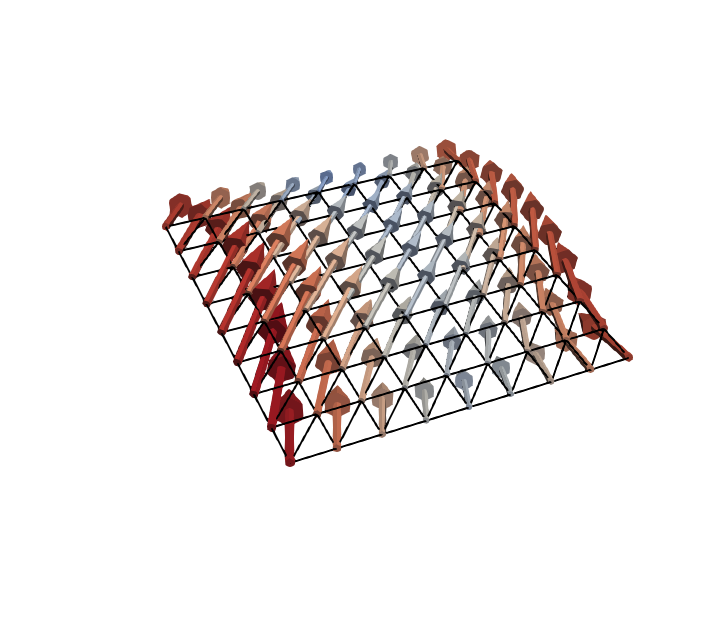}
		\includegraphics[width=0.49\textwidth, trim={2cm 0.5cm 0.8cm 2.5cm},clip]{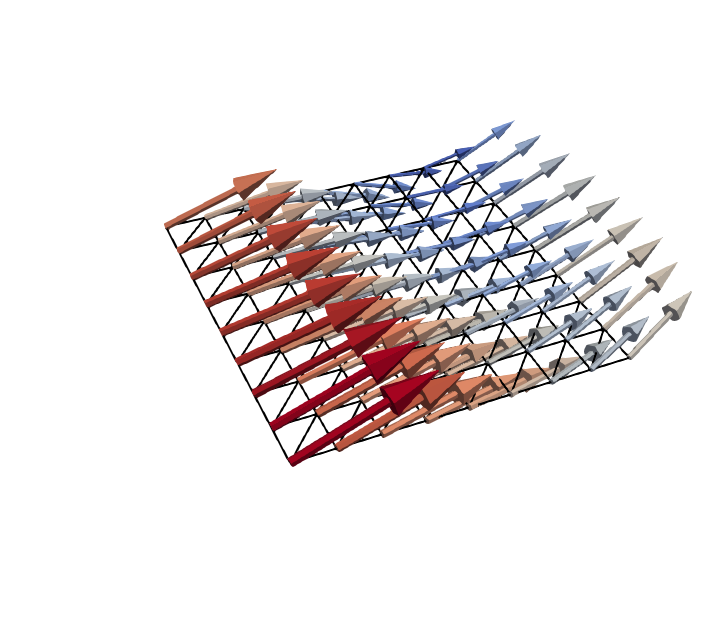}
	\end{minipage}
	\caption{Left: first 100 singular values for each variable from the switching example with non-constant initial condition.  Right: 4 magnetization reduced basis functions corresponding to the 4 leading singular values. The vector fields are color-coded with the magnitude.}
	\label{fig:5_rbfs_switch}
\end{figure}

We consider the supremizer-stabilized POD-TPS method with the same number of velocities and Lagrange multipliers reduced basis functions.
We compare it with SG-RBP and select the scheme's parameters to obtain about $N_S=64$ sparse grid nodes, i.e. as many as the snapshots used to compute the reduced bases above. Finally, we recall that the high fidelity scheme is always inf-sup stable.

In Figure~\ref{fig:compare_approx_fix_t}, we visually compare the approximation power of the schemes for a single online sample. While the POD-TPS approximation is visually indistinguishable from the high-fidelity solution, the SG-RBP approximation is largely inaccurate. Intuitively, we interpret this as follows: POD-TPS approximates better the energy of the system because it uses the same time-marching as the
high-fidelity model. Conversely, the SG-RBP is a purely ``data-driven'' approximation that does not make assumptions regarding the time evolution, thus is highly dissipative (on this challenging example) and can only converge with very many samples.
\begin{figure}
	\includegraphics[width=0.33\linewidth,trim=20 20 20 20,clip]{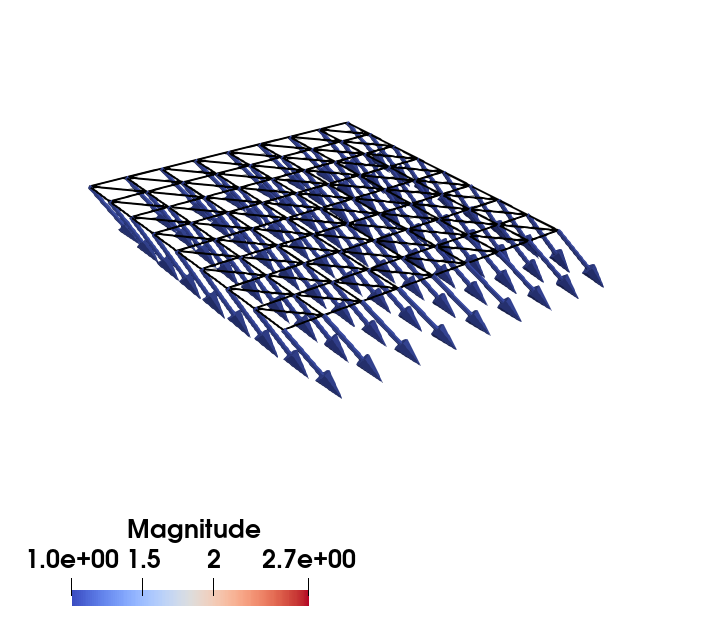}
	\includegraphics[width=0.33\linewidth,trim=20 20 20 20,clip]{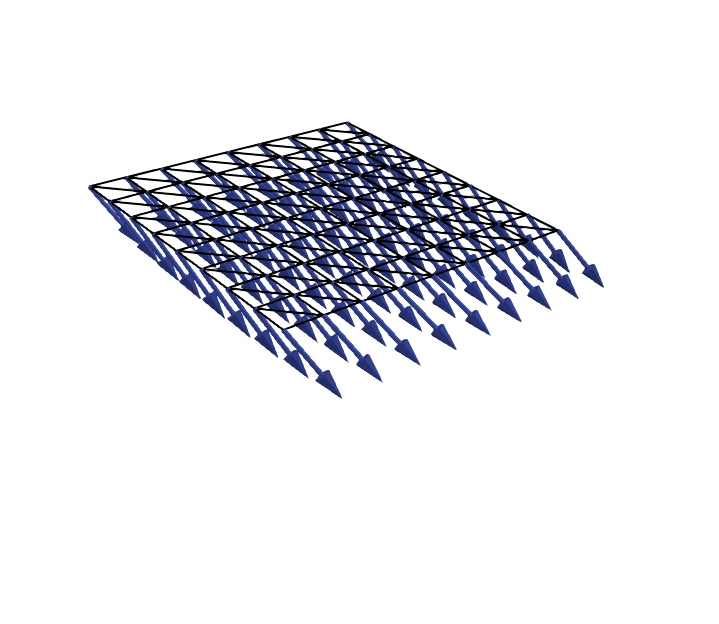}
	\includegraphics[width=0.33\linewidth,trim=20 20 20 20,clip]{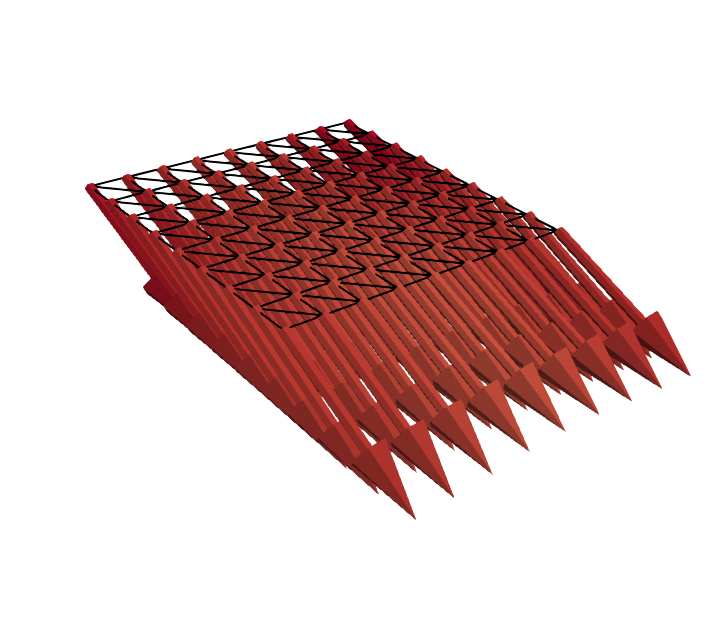}
	\caption{Visual comparison of POD-TPS and SG-RBP approximations on the switching example. Each image represents the magnetization at time $t=1$. Left: High-fidelity solution; Center: POD-TPS approximation; Right: SG-RBP approximation.}
	\label{fig:compare_approx_fix_t}
\end{figure}

In Figure~\ref{fig:compare_pod-tps_sg_rbp_tt}, we compare the two methods through metrics related to time. In particular, denoting $\bm$ the high-fidelity solution and $\bm_h$ the approximation with one of the two methods, we consider:
\begin{itemize}
	\item Unit modulus error: i.e. 
        \begin{equation}
            \int_D \seminorm{1-\seminorm{\bm_h(\by, t, \bx)}{}^2}{} \text{d}\bx,
        \end{equation}
	\item $H^1(D)^3$-error, i.e. 
		\begin{equation}
		    \sqrt{\int_D \seminorm{ (\bm-\bm_h)(\by, t, \bx)}{}^2 + \seminorm{\nabla (\bm-\bm_h)(\by, t, \bx)}{}^2 \text{d}\bx},
		\end{equation}
	\item Dirichlet energy, i.e. 
		\begin{equation}
		    \sqrt{\int_D \seminorm{\nabla \bm(\by, t, \bx)}{}^2 \text{d}\bx},
		\end{equation}
	\item Space-averaged $z$-component of the magnetization, i.e.
        \begin{equation}
            \int_D m_z(\by, t, \bx) \text{d} \bx.
        \end{equation}
\end{itemize}
	\begin{figure}
		\centering
        \begin{tikzpicture}
            \begin{semilogyaxis}[ 
                xlabel={$t$},
                grid=major,
                thick,
                axis line style={-},
                axis x line=bottom, axis y line=left,
                width=0.48\linewidth,
                legend style={at={(0.02,0.98)},anchor=north west}
                ]
                \addplot[smooth, ultra thick, no markers,color=blue] table [x index={0}, y index={3}, col sep=comma] {data_plots/switch_ncIC/time_metrics_POD.csv};
                \addlegendentry{POD-TPS}
                \addplot[smooth, ultra thick,color=red] table [x index={0}, y index={2}, col sep=comma] {data_plots/switch_ncIC/time_metrics_SG.csv};
                \addlegendentry{SG-RBP}
            \end{semilogyaxis}
        \end{tikzpicture}
        \begin{tikzpicture}
            \begin{semilogyaxis}[ 
                xlabel={$t$},
                grid=major,
                thick,
                axis line style={-},
                axis x line=bottom, axis y line=left,
                width=0.48\linewidth,
                legend style={at={(0.98,0.02)},anchor=south east}
                ]
                \addplot[smooth, ultra thick, no markers,color=blue] table [x index={0}, y index={1}, col sep=comma] {data_plots/switch_ncIC/time_metrics_POD.csv};
                \addlegendentry{POD-TPS}
                \addplot[smooth, ultra thick,color=red] table [x index={0}, y index={1}, col sep=comma] {data_plots/switch_ncIC/time_metrics_SG.csv};
                \addlegendentry{SG-RBP}
            \end{semilogyaxis}
        \end{tikzpicture}
        %
        %
        \begin{tikzpicture}
            \begin{axis}[ 
                xlabel={$t$},
                grid=major,
                thick,
                axis line style={-},
                axis x line=bottom, axis y line=left,
                width=0.48\linewidth,
                ]
                \addplot[smooth, ultra thick, color=blue] table [x index={0}, y index={4}, col sep=comma] {data_plots/switch_ncIC/time_metrics_POD.csv};
                \addlegendentry{POD-TPS}
                \addplot[smooth, ultra thick, dashed, color=orange] table [x index={0}, y index={5}, col sep=comma] {data_plots/switch_ncIC/time_metrics_POD.csv};
                \addlegendentry{HF}
                \addplot[smooth, ultra thick,color=red] table [x index={0}, y index={3}, col sep=comma] {data_plots/switch_ncIC/time_metrics_SG.csv};
                \addlegendentry{SG-RBP}
            \end{axis}
        \end{tikzpicture}
        \begin{tikzpicture}
            \begin{axis}[ 
                xlabel={$t$},
                grid=major,
                 thick,
                axis line style={-},
                axis x line=bottom, axis y line=left,
                width=0.48\linewidth,
                legend style={at={(0.02,0.02)},anchor=south west}
                ]
                \addplot[smooth, ultra thick, no markers,color=blue] table [x index={0}, y index={6}, col sep=comma] {data_plots/switch_ncIC/time_metrics_POD.csv};
                \addlegendentry{POD-TPS}
                \addplot[smooth, ultra thick, dashed, color=orange] table [x index={0}, y index={7}, col sep=comma] {data_plots/switch_ncIC/time_metrics_POD.csv};
                \addlegendentry{HF}
                \addplot[smooth, ultra thick,color=red] table [x index={0}, y index={5}, col sep=comma] {data_plots/switch_ncIC/time_metrics_SG.csv};
                \addlegendentry{SG-RBP}
            \end{axis}
        \end{tikzpicture}
		\caption{Comparison of stabilized POD-TPS and SG-RBP through time-related metrics computed on 1 online sample. All metrics are computed with respect to time $t\in I$ and for a single parameter tuple $\by\in\bGamma$.
		Top Left: Unit modulus error; Top Right: $H^1(D)^3$-error; Bottom Left: Dirichlet energy; Bottom Right: Space-averaged $z$-component of the magnetization.}
		\label{fig:compare_pod-tps_sg_rbp_tt}
	\end{figure}
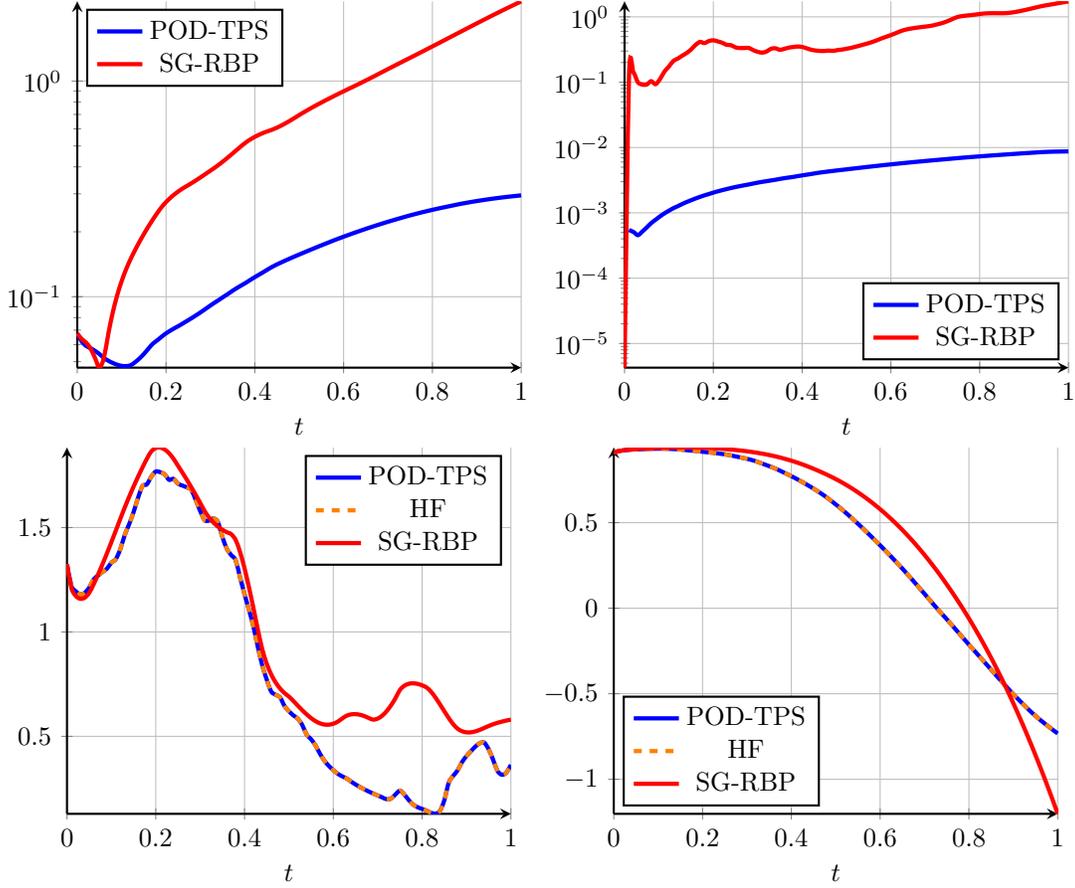

The POD-TPS consistently gives a more accurate approximation. This may be attributed to the more precise energy approximation, clearly visible in the bottom left plot in the figure.

Finally, we compare the two schemes through their error on the $L^2_{\mu}(\bGamma, L^2(I,H^1(D)^3))$ norm, which we approximate with a Monte Carlo mean with $N=30$ normally distributed random samples of the high-fidelity model with time step size $\tau = 10^{-3}$ and mesh size $h=0.125$ (the finite element space $\bV_h^3$ used for the magnetizations and velocities has dimension 867).

In Figure~\ref{fig:compare_schemes_total_error}, we compare the error of the two schemes for the following parameters: 
For the offline snapshots, we use a time step size $\tau = 5\cdot 10^{-3}$ and mesh size $h = 0.125$. 
For the (stabilized) POD-TPS scheme, we decrease the reduced basis truncation tolerance, thus increasing the number of reduced basis functions. We repeat the convergence test with two fixed time steps $\tau = 10^{-3}$ and $\tau = 5\cdot10^{-3}$ to study the effect of time discretization on the online error.
For SG-RBP, we increase the number of collocation points and use a fixed reduced basis obtained with tolerance ${\epsilon^2_{\normalfont\text{POD}}} = 10^{-6}$ and resulting in 70 reduced basis functions. This is enough for its error to be negligible.

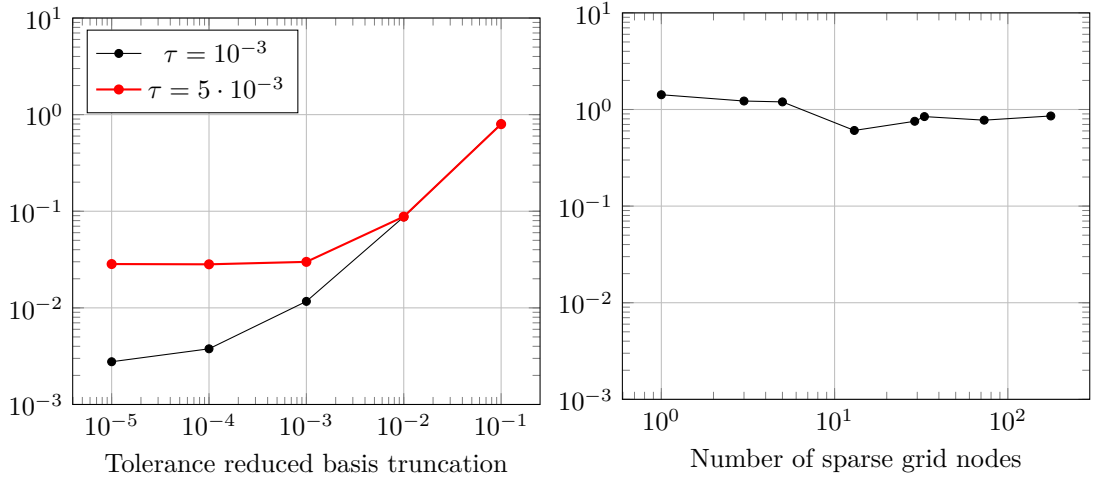
\begin{figure}
	\center
	\begin{tikzpicture}
		\begin{loglogaxis}[
			grid=major,
			xlabel={Tolerance reduced basis truncation},
			legend pos=north west,
			width=0.5\linewidth,
			ymin=1.e-3, ymax=1.e1
		]
			\addplot[mark=*, mark size=1.5pt, color=black] table[col sep=comma, x index={0}, y index={4}
			]{data_plots/switch_ncIC/conv_test_PODTPS_DT_1e-3.csv};
			\addlegendentry{$\tau=10^{-3}$}
			\addplot[mark=*, mark size=1.5pt, thick, color=red] table[col sep=comma, x index={0}, y index={4}
			]{data_plots/switch_ncIC/conv_test_PODTPS_DT_5e-3.csv};
			\addlegendentry{$\tau=5\cdot 10^{-3}$}
		\end{loglogaxis}
	\end{tikzpicture}
	\begin{tikzpicture}
		\begin{loglogaxis}[
			grid=major,
			xlabel={Number of sparse grid nodes},
			legend pos=south west,
			width=0.5\linewidth,
			ymin=1.e-3, ymax=1.e1
		]
			\addplot[mark=*, mark size=1.5pt] table[col sep=comma, x index={0}, y index={3}
			]{data_plots/switch_ncIC/conv_full_norm_SG.csv};
		\end{loglogaxis}
	\end{tikzpicture}
	\caption{Convergence of the approximation error of POD-TPS and SG-RBP schemes used in the online phase.
	Left: POD-TPS error with respect to the tolerance used to truncate the reduced bases computed offline. The online computation is carried out for tho distinct time step sizes.
	Right: SG-RBP error with respect to the number of sparse grid points.}
	\label{fig:compare_schemes_total_error}
\end{figure}

For POD-TPS, we observe a reduction in convergence speed as the number of reduced basis functions increases. This is mostly caused by the time step approximation, as the convergence test with smaller time step size leads, for the same number of reduced basis functions, to smaller errors. Reducing the time step size further would eventually lead to the expected algebraic convergence throughout.

The lack of convergence of SG-RBP, not observed in previous examples, can be attributed to the lack of sparsity of the parametric switching dynamics: The parameter-to-solution map lacks a sufficiently large domain of holomorphic extension. Thus, the error is bound to remain constant unless very many samples are used.
Indeed, all previous examples behaved like a relaxation with random perturbations. By contrast, in the present example, the random noise can cause dramatic changes in the problem's dynamics.

\section{Concluding Remarks}
\label{eq:concluding_remarks}

In this work, we considered the SLLG equation, a time-dependent, nonlinear stochastic PDE used to model the evolution
in time of the magnetization field in a ferromagnetic material and subject to random heat variations. 

Traditionally, the TPS has been used as a reliable, robust scheme to solve the LLG equation, while additionally preserving the unit modulus of the magnetization.
However, due to its high computational cost, caused by finite element discretizations of large dimensions and small time step sizes, it cannot be used for the repeated computation of large ensembles of the dynamics.

After reformulating the SLLG equation into a parametric PDE, we introduced two reduced order/surrogate modeling numerical schemes to alleviate this computational burden. These are:
\begin{itemize}
    \item \textbf{Galerkin POD-Tangent Plane Scheme (POD-TPS)}: 
    Firstly, we sample the solution of the parametric LLG equation in a judiciously selected collection of points in the parameter space. Then, by means of a POD as described in Section \ref{eq:empirical_POD}, we extract bases of small dimension for the velocity field and the Lagrange multiplier of the saddle point formulation.  We propose a novel stabilization method which is proved to generate an inf-sup stable discretization of the saddle point problem arising in the online phase of the Galerkin POD-TPS method. Numerical experiments indicate that this scheme performs exceptionally well compared to the Galerkin POD-TPS with no stabilization.
    \item \textbf{Sparse Grid-Reduced Basis Projection (SG-RBP)}: 
    First, we use the high-fidelity TPS to sample the magnetizations corresponding to the sparse grid collocation nodes in the parameter space. 
    We then compress the magnetizations by projecting onto a reduced basis (computed analogously to the offline phase of the Galerkin POD-TPS scheme). 
    Finally, in the online phase, we interpolate the compressed coordinates vector with a negligible computational burden. While this method does not require any further stabilization, we observe larger errors {than in Galerkin POD-TPS}. This is attributed to the \emph{data driven} nature of the interpolation, which does not guarantee energy preservation.
\end{itemize}


Several challenges remain, particularly in extending these approaches to problems involving moving domain walls. In such cases, classical linear model order reduction methods may be ineffective due to the transport-dominated nature of the magnetization dynamics. Alternatively, one may use non-linear 
model order reduction techniques and neural networks to address these issues. 

Another possible development is the use of adaptive time-stepping to accurately capture rapid transitions in magnetization dynamics, such as switching events or domain wall motion. Refining the time discretization where sharp variations occur can improve accuracy at only a marginal increase in computational cost.

\section*{Acknowledgments}
AS's research was funded in part by the Austrian Science Fund (FWF) projects \href{https://doi.org/10.55776/F65}{10.55776/F65} and \href{https://doi.org/10.55776/P33477}{10.55776/P33477}.

MF and FH's work was funded by the Deutsche Forschungsgemeinschaft (DFG, German Research Foundation) – Project-ID 258734477 – SFB 1173 and the Austrian Science Fund (FWF) under the projects I6667-N and 
10.55776/F65 (SFB F65 “Taming complexity in PDE systems”). Funding was also received from the European Research Council (ERC) under the Horizon 2020 research and innovation program of the European Union (Grant agreement No. 101125225).

\section*{Declarations}
\textbf{Data Availability.} We do not analyze or generate any datasets because our work proceeds within a theoretical and mathematical approach.
\textbf{Conflicts of Interest.} The authors declare that they have no conflict of interest.
\textbf{Reproducibility.} Code available upon request.

\bibliographystyle{acm}
\bibliography{references}
\end{document}